\newcommand{\ev}[1]{#1}%
\newcommand{\sv}[1]{}%
\newcommand{\lv}[1]{#1}%
\newcommand{\appendixText}{}
 \newcommand{\toappendix}[1]{#1}%
  \author[1]{Gaspard Charvy}%
  \author[2]{Tomáš Masařík%
  \thanks{T.M.~was supported by Polish National Science Centre SONATA-17 grant number 2021/43/D/ST6/03312.}}
  \affil[1]{Univ Rennes, F-35000 Rennes, France

  \texttt{gcharvy@ens-rennes.fr}} %
  \affil[2]{Institute of Informatics, Faculty of Mathematics, Informatics and Mechanics, University~of~Warsaw, Warszawa, Poland 

  \texttt{masarik@mimuw.edu.pl}}
  \date{}
\newtheorem{theorem}{Theorem}[section]
\newtheorem{lemma}[theorem]{Lemma}
\newtheorem{observation}[theorem]{Observation}
\newtheorem{conjecture}[theorem]{Conjecture}
\newtheorem{question}[theorem]{Question}
\authorrunning{G. Charvy and T. Masařík} %
\keywords{Crossing Number, Edge Crossing Number, Planarity, Graph Drawing, Thickness, Maximum Uncrossed Subgraph, Uncrossed Collection} %
\let\epsilon\varepsilon
\newcommand{\unc}{\ensuremath{\mathrm{unc}}}
\newcommand{\oh}{o}
\newcommand{\Dd}{\mathcal{D}}
\title{General Strong Bound on the Uncrossed Number via a Tight Bound for the Maximum Uncrossed Subgraph Number}
\begin{document}

\maketitle

\begin{abstract}
 We investigate a very recent concept for visualizing various aspects of a graph in the plane using a collection of drawings introduced by Hliněný and Masařík~[GD 2023].
Formally, given a graph $G$, we aim to find an \emph{uncrossed collection} containing drawings of $G$ in the plane such that each edge of $G$ is uncrossed in at least one drawing in the collection.
The \emph{uncrossed number} of $G$ ($\unc(G)$) is the smallest integer $k$ such that an uncrossed collection for $G$ of size $k$ exists. 
The uncrossed number is lower-bounded by the well-known \emph{thickness}, which is an edge-decomposition of $G$ into planar graphs.
Hence, this connection gives a trivial lower-bound $\left\lceil\frac{|E(G)|}{3|V(G)|-6}\right\rceil  \le \unc(G)$.
Balko, Hliněný, Masařík, Orthaber, Vogtenhuber, and Wagner [GD~2024] presented the first non-trivial and general lower-bound on the uncrossed number.
While the exact formulation is a bit cumbersome, its biggest impact can be summarized in terms of dense graphs (where $|E(G)|=\epsilon(|V(G)|)^2$ for some $\epsilon>0$).
Their lower-bound states $\left\lceil\frac{|E(G)|}{c_\epsilon|V(G)|}\right\rceil  \le \unc(G)$, where $c_\epsilon\ge 2.82$ is a constant depending on~$\epsilon$.

We improve the lower-bound to ${\left\lceil\frac{|E(G)|}{3|V(G)|-6-\sqrt{2|E(G)|}+\sqrt{6(|V(G)|-2)}}\right\rceil  \le \unc(G)}$.
Translated to dense graphs regime, the bound yields a multiplicative constant $c'_\epsilon=3-\sqrt{2\epsilon}$ in the following expression ${\left\lceil\frac{|E(G)|}{c'_\epsilon|V(G)|+\oh(|V(G)|)}\right\rceil  \le \unc(G)}$.
Therefore, it is tight (up to lower-order terms) for $\epsilon \approx \frac{1}{2}$ as warranted by complete graphs.

In fact, we formulate our result in the language of the \emph{maximum uncrossed subgraph number}, %
that is, the maximum number of edges of $G$ that are not crossed (uncrossed) in a drawing of $G$ in the plane.
In that case, we also provide a construction certifying that our bound is asymptotically tight (up to lower-order terms) on dense graphs for all $\epsilon>0$.

\lv{%
  \bigskip{} \begin{center} 
    Keywords: Crossing Number, Edge Crossing Number, Planarity, Graph Drawing, Thickness, Maximum~Uncrossed~Subgraph, and Uncrossed Collection.
\end{center}
}
\end{abstract}

\clearpage

\section{Introduction}

Graph visualization is one of the fundamental graph problems and was among the first problems shaping the field of graph theory.
In this paper, we investigate problems that aim to draw a graph in the plane nicely using a standard definition of a (plane)\footnote{We only consider drawings in the plane in this paper. Hence, for conciseness, we often say just drawing in most cases, but we always mean a plane drawing.} drawing; see~\cref{sec:prelim} for a formalization.
There have been plenty of interesting definitions of what drawing a graph nicely in the plane means.
Perhaps the oldest concept was one controlling crossings of the edges.
A \emph{crossing} in a drawing $D$ of $G$ is a common interior point of two distinct edges of $D$. %
As a related notion, we say that an edge $e$ of $D$ is \emph{uncrossed} in $D$ if it does not share a crossing with any other edge of $D$.
In this direction, the \emph{crossing number} ($\mathrm{cr}(G)$) of a graph $G$ is defined as the smallest number of crossings among all possible drawings of $G$. 
We refer to the book and the recently updated survey by Schaefer~\cite{Schaefer18, Schaefer13} for further details about this interesting notion.

Our primary focus will be on the concept of the uncrossed edges.
Instead of minimizing the number of individual crossings in the whole graph, we rather maximize the number of edges that are uncrossed among all possible drawings of $G$.
This number is called the \emph{maximum uncrossed subgraph number}\footnote{Do not confuse this notion with the \emph{maximum planar subgraph number}, which is a complementary notion to the \emph{skewness} of a graph. The skewness is defined as the smallest number of edges that need to be removed from a graph to make it planar.} of $G$ ($h(G)$).
Alternatively, its complement is called \emph{edge crossing number} in the literature, i.e., minimizing the number of edges that are crossed among all possible drawings of $G$.
This problem formulation dates back to 1964 when Ringel established $h(K_n)=2n-2$~\cite{ringel64}.
Since then, there has been some interest in the problem, but it has received considerably less attention compared to the crossing number.

Another early direction dating back to 1961~\cite{Har61} was a study of the \emph{thickness} of graph $G$ ($\theta(G)$), which is the smallest number of planar graphs into which we can edge-decompose $G$.
Much less studied but also very relevant for our main problem is a related notion of the \emph{outerthickness} of graph $G$ ($\theta_o(G)$), which is the smallest number of outerplanar graphs into which we can edge-decompose $G$.
For related results covering those notions, we refer to the 1998 survey by Mutzel, Odenthal, and Scharbrodt~\cite{ThickSurvey}, and more recent papers~\cite{ThicknessRecent,GeomThick,OuterthicknessRecent} pointing at the current development.

In this paper, the main object of our study is the uncrossed number defined only very recently by Hliněný and Masařík~\cite{masHlin23}.
Their idea is to visualize a graph not by a single drawing but rather using a collection of drawings such that each aspect of the graph is highlighted in at least one drawing nicely.
Such a general statement can, of course, have many formalizations, as what is the meaning of aspect and nicely needs to be specified.
Hliněný and Masařík proposed to start with well-established concepts used to characterize the quality of a single drawing, such as the crossing number, the maximum uncrossed subgraph number, and the thickness.
They define an \emph{uncrossed collection} of $G$ as a collection $\mathcal{D}$ containing drawings of $G$ such that for each edge $e\in E(G)$ there is a drawing $D\in \mathcal{D}$ such that $e$ is uncrossed in $D$.
Then the \emph{uncrossed number} of $G$ ($\unc(G)$) is the least integer $k$ for which an uncrossed collection of $G$ of size $k$ exists.
It is easy to observe that the uncrossed edges of each $D\in\Dd$ have to form a planar graph.
Likewise, if we have an edge decomposition of $G$ into outerplanar graphs, then for each such outerplanar subgraph, we can always realize all the remaining edges in the outer face without crossing any of the outerplanar edges.
This yields a trivial uncrossed collection of $G$.
Combining those observations with a result of Gon\c{c}alves~\cite{Goncalves2}, which states $\theta_o(G)\leq2\theta(G)$, we obtain the following relations:
\begin{equation}
  \theta(G) \leq \unc(G) \leq \theta_o(G) \leq 2 \theta(G).
\end{equation}

Recently, Balko, Hliněný, Masařík, Orthaber, Vogtenhuber, and Wagner~\cite{Uncrossed2} proved the exact bounds on the uncrossed number for complete and complete bipartite graphs.
Moreover, they showed the first non-trivial lower-bound.
\begin{theorem}[{\cite[Theorem 3]{Uncrossed2}}]
\label{thm:old-lb}
Every connected graph $G$ with $n \geq 3$ vertices and $m \geq 0$ edges satisfies
\[
\unc(G) \geq \left\lceil\frac{m}{(3n-5+\sqrt{(3n-5)^2-4m})/2}\right\rceil.
\]
\end{theorem}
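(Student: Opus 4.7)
The plan is to reduce the lower bound on $\unc(G)$ to an upper bound on the maximum uncrossed subgraph number $h(G)$, and then to establish that upper bound via a face-by-face analysis.

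\textbf{Stage 1 (reduction).} Given an uncrossed collection $\Dd=\{D_1,\ldots,D_k\}$ for $G$, the uncrossed edges in each $D_i$ form a planar subgraph of $G$ with at most $h(G)$ edges, and the union of these edge sets must equal $E(G)$. This yields $k\cdot h(G)\geq m$, hence $\unc(G)\geq \lceil m/h(G)\rceil$. So it suffices to show
\[
h(G)\leq H:=\frac{3n-5+\sqrt{(3n-5)^2-4m}}{2}.
\]
Since $H\geq (3n-5)/2$, the bound is nontrivial only when $h(G)>(3n-5)/2$; in that range it is equivalent (squaring) to $m\leq h(3n-5-h)$, or, writing $c=m-h$ for the crossed edges in an optimal drawing, to the clean inequality $c\leq h(3n-6-h)$.

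\textbf{Stage 2 (face analysis).} Fix a drawing $D$ of $G$ attaining $h=h(G)$ uncrossed edges, with uncrossed subgraph $U$ and crossed complement $C$ of size $c=m-h$. Because no edge of $U$ is crossed in $D$, each $e\in C$ lies entirely inside a single face $f(e)$ of the plane drawing of $U$, with both endpoints on its boundary walk. Let $\ell_f$ denote the length of the boundary walk of face $f$, so $\sum_f \ell_f = 2h$, and, when $U$ is connected, $|F(U)|=h-n+2$ by Euler. For a face whose boundary is a simple cycle of length $\ell_f$, the crossed edges inside it are chords of that cycle, so
\[
c_f\leq \binom{\ell_f}{2}-\ell_f=\frac{\ell_f(\ell_f-3)}{2}.
\]
Summing over faces and pushing the length distribution to its extremal shape (one long face of length $3n-3-h$, the rest triangles, subject to $\sum_f \ell_f = 2h$ and $|F(U)| = h-n+2$) then yields $c\leq h(3n-6-h)$ in the regime $h\geq n-1$, which is implied by the assumption $h>(3n-5)/2$ for $n\geq 3$. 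Rearranging gives $h(G)\leq H$, and combining with Stage~1 delivers the theorem.

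\textbf{Main obstacle.} The clean per-face bound $c_f\leq \ell_f(\ell_f-3)/2$ is valid when $U$ is $2$-connected and spanning so that each face boundary is a simple cycle. The hardest part will be treating the degenerate configurations: non-spanning $U$ (with vertices of $G$ placed inside face interiors, whose incident crossed edges must be accounted for), disconnected $U$ (which changes the face count in Euler's formula), and non-$2$-connected $U$ (with face walks revisiting vertices and bridges traversed twice). A clean route is to augment $U$ to a spanning $2$-connected planar subgraph---either by inserting edges of $G$ while controlling how $c$ changes, or by adding "virtual" boundary edges that only enlarge the right-hand side---and then apply the face computation. Once this case analysis is handled, the algebraic passage from $c\leq h(3n-6-h)$ to the quadratic bound on $h$ and hence to the theorem is a routine convexity/extremal argument.
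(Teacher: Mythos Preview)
Your two-stage plan matches the paper's route: the paper records the reduction $\unc(G)\ge\lceil m/h(G)\rceil$ as Observation~\ref{obs:uncH}, and its Lemma~\ref{lemma:complex_k} at $k=3$ recovers exactly the bound $h(G)\le b_3^+(n,m)=\tfrac{1}{2}\bigl(3n-5+\sqrt{(3n-5)^2-4m}\bigr)$ via the same face-by-face count of crossed edges and the same quadratic in $m_i$.

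Where you diverge is in the ``main obstacle''. The paper never augments $U$ to a $2$-connected graph. Instead it (i) invokes \cite[Lemma~7]{Uncrossed2} (restated here as Lemma~\ref{lemma:7_balko}) to replace $D'$ by a connected, spanning uncrossed subdrawing, and (ii) for each face $F$ works with the number $v(F)$ of \emph{distinct} boundary vertices rather than the walk length $|F|$. The crossed edges inside $F$ are then bounded by $\binom{v(F)}{2}-v(F)$ (because the boundary contains a cycle once $U$ is not a tree), and only afterwards does one relax via $v(F)\le|F|$ to obtain $\sum_F |F|(|F|-3)/2$. This sidesteps entirely the non-$2$-connected and repeated-vertex issues you flag, and is much cleaner than trying to add real or virtual edges while tracking how $h$ and $c$ shift. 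Your augmentation idea is plausible but delicate (adding edges can move $h$ past the vertex of the parabola $h(3n-6-h)$, and virtual edges need not correspond to crossed edges of $G$); the $v(F)$ trick makes all of that unnecessary. The final algebra---your $c\le h(3n-6-h)$---is exactly what the paper gets from $\sum_F|F|(|F|-3)\le(\sum_F|F|)(\sum_F(|F|-3))=2h(3n-6-h)$ after Euler.
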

Despite being a general bound for all graphs, it is strongest in the dense regime, i.e., for graphs such that $m=\epsilon n^2$ for some $\epsilon>0$.
Then \cref{thm:old-lb} states $\unc(G) \ge \left\lceil\frac{m}{c_\epsilon n}\right\rceil  $, where $c_\epsilon\ge 2.82$ is a constant depending on $\epsilon$.

\subparagraph*{Maximum Uncrossed Subgraph Number.}
Surprisingly, very little is known about the Maximum Uncrossed Subgraph Number.
Ringel~\cite{ringel64} introduced the notion and gave the exact bound for complete graphs.
Mengersen~\cite{germanPaper} extended Ringel’s result from complete to complete multipartite graphs.
\begin{theorem}[Exact $h$ value for complete~\cite{ringel64} and complete bipartite~\cite{germanPaper} graphs]
\label{thm-germanPaper}
For all positive integers $n$, we have $h(K_n)=2n-2$.
For all positive integers $n_1$ and $n_2$ with $n_1 \leq n_2$, we have
\[
h(K_{n_1,n_2}) = 
\begin{cases}
    2n_1+n_2-2, & \text{for } n_1=n_2 \\
    2n_1+n_2-1, & \text{for } n_1 < n_2 < 2n_1 \\
    2n_1+n_2, & \text{for } 2n_1 \leq n_2.
\end{cases}
\]
\end{theorem}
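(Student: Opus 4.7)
My plan is to establish each equality by matching an explicit drawing (for the lower bound) with a structural argument about the uncrossed subgraph of an arbitrary drawing (for the upper bound).

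For $h(K_n) \ge 2n-2$ I would use the \emph{wheel} construction: place a vertex $v_0$ at the centre and $v_1,\ldots,v_{n-1}$ on a convex polygon around it, draw all $n-1$ spokes $v_0v_i$ and all $n-1$ polygon sides, yielding a planar embedding of $W_{n-1}$ with exactly $2n-2$ edges, and then route each remaining polygon diagonal through the unbounded outer face. Those diagonals may cross each other, but they never meet the wheel, so all $2n-2$ edges of $W_{n-1}$ are uncrossed. For the matching upper bound the decisive observation is that in any drawing $D$ of $K_n$ the uncrossed edges form a planar subgraph $H$ with the combinatorial embedding inherited from $D$, and each edge $uv \in E(K_n)\setminus E(H)$ must be drawn inside a single face of $H$ whose boundary contains both $u$ and $v$; otherwise it would necessarily cross some edge of $H$. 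Equivalently, every pair of vertices of $K_n$ is \emph{co-facial} in $H$, and the task becomes to deduce $|E(H)| \le 2n-2$ from this face-cover condition.

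I expect this last conversion to be the main obstacle: a naive double count combining Euler's formula with $\sum_F |F| = 2|E(H)|$ only yields the planarity bound $3n-6$, so the face-cover condition must enter non-trivially. My first attempt would be an extremal/exchange argument. Assume $|E(H)|\ge 2n-1$ in some candidate drawing; I would then show that adding any edge to a $(2n-2)$-edge planar configuration forces some previously co-facial pair of vertices to split into two distinct faces, killing the ability to draw the corresponding $K_n$-edge. Formalising this into an induction on $|E(H)|$, or equivalently into a minimum-counterexample argument that locates a face $f$ of $H$ and a pair of boundary vertices of $f$ which can no longer be connected through any face, should close the bound.

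For $h(K_{m,n})$ I would mirror the scheme. For each of the three regimes $m=n$, $m<n<2m$, and $2m\le n$ I would exhibit a tailored "bipartite wheel" style drawing realising $2m+n-2$, $2m+n-1$, and $2m+n$ uncrossed edges respectively, and derive the matching upper bound from the bipartite Euler bound $|E|\le 2V-4$ (using that every face of a planar bipartite graph has length at least four) combined with the same face-cover condition, now restricted to pairs that actually form an edge in $K_{m,n}$. I expect the middle range $m<n<2m$ to be the most delicate, since neither side dominates and both the construction and the matching bound have to balance the contribution of a large "outer" bipartite face against smaller interior quadrilateral faces.
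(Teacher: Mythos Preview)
The paper does not prove \cref{thm-germanPaper}; it is quoted from the literature (Ringel~\cite{ringel64} for $K_n$ and Mengersen~\cite{germanPaper} for $K_{m,n}$) and used as background, so there is no ``paper's own proof'' to compare against. The only related content in the paper is the restatement of Ringel's theorem in \cref{sec:constuct}, which records that the extremal uncrossed subdrawing of $K_n$ is precisely the wheel $DW_n$ with all remaining edges in the outer face---exactly your proposed lower-bound construction.

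On the substance of your sketch: the lower-bound constructions are fine and standard. The real content is the upper bound, and here your plan is too vague to constitute a proof. You correctly identify the key structural fact (the uncrossed edges form a planar graph $H$ in which every edge of the host graph has both endpoints on a common face; this is the paper's \cref{lemma:7_balko}), but your proposed ``exchange/minimum-counterexample'' argument for pushing $|E(H)|$ from $3n-6$ down to $2n-2$ is not an argument yet---it is a hope. Ringel's actual proof is a careful face-counting analysis exploiting that \emph{every} pair of vertices must be co-facial, which severely restricts the face structure; the bipartite case in~\cite{germanPaper} is more intricate still, with the three regimes arising from genuinely different extremal configurations. If you want to reconstruct these proofs you will need to engage with that combinatorics directly rather than rely on a generic extremal swap.
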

\noindent
The \NP-completeness of the problem was shown recently in~\cite[Corollary 5]{Uncrossed2}.
Later, Colin de Verdière and Hliněný~\cite[Proposition 5.9]{ecrFPT} complemented this hardness result with an \FPT{} algorithm parameterized by the number of crossed edges.
This also gives fixed-parameter tractability with respect to $h(G)$: indeed, every connected graph has $h(G)\ge n-1$, and hence the number of crossed edges is at most $\binom{n}{2}-h(G)=O(h(G)^2)$.
In fact, their algorithm can be generalized to any surface.
In~\cite[Theorem~18]{BeyondPlanar24}, the authors gave an example of a graph that has $\Omega(k^2)$ crossings when restricted to drawings that allow at most $k$ crossed edges while the unconstrained crossing number of that graph is only $O(k)$.

The maximum uncrossed subgraph notion has been generalized in various directions.
We list here a non-exhaustive collection of major generalizations; see also~\cite{Schaefer13} for further pointers.
Harborth and Mengersen~\cite{haMen74} also studied drawings where they attempted to construct drawings with a prescribed number of uncrossed edges (hence not necessarily optimal).
The same authors~\cite{haMen90} further generalized the notion when they count edges having at most a fixed constant number of crossings as uncrossed.
Harborth and Thürmann~\cite{haThu96} studied a variant of the notion subject to rectilinear drawings.
This direction was very recently picked up by Chen and Solé-Pi~\cite{ChenSole-Pi} who studied the whole sequence characterizing the number of crossings per edge in the rectilinear setting.
Bannister, Eppstein, and Simons~\cite{Bookecr} studied a book embedding variant.
In a different direction, Kynčl and Valtr~\cite{KV} studied an interesting concept. They estimated the number of crossings of an edge with the least number of crossings over all drawings of a complete graph where all pairs of edges intersect at most once.

\subparagraph*{Uncrossed Collection Related Results.}
Hliněný and Masařík~\cite{masHlin23} also suggested another optimization variant.
Instead of bounding the number of drawings in an uncrossed collection, they proposed counting the sum of all crossings that appeared in all drawings of some uncrossed collection.
They denoted the smallest such number as the \emph{uncrossed crossing number}.
Recently, another variant of the collection concept has appeared in the literature.
The authors~\cite{unbent} defined the \emph{unbent collection} of an $(\le{}\!4)$-regular planar graph as a collection of orthogonal drawings such that for each edge there exists at least one drawing in the collection where the edge is not bent.
Analogously, the \emph{unbent number} is the minimum size of an unbent collection, and the \emph{unbent bent number} is the minimum number of bends in all drawings within an unbent collection.
The above results add to a recent line of work in graph visualization, which uses a collection or a sequence of drawings.
The general idea is to slice a given graph $G$ into a sequence of planar drawings where each vertex (or edge) appears only in consecutive drawings within the sequence; see the following references for precise formal definitions. 
The intention is that the viewer reconstructs the whole structure by watching the sequence.
Probably the oldest such concept was edge-oriented \emph{streamed planarity} where the number of drawings in which each edge is visible is fixed~\cite{Streamed}.
A related idea, graph \emph{stories}, focuses on vertices still using a fixed-size window of appearances~\cite{Stories, Stories2}.
A more general \emph{storyplan model} allows for a variable number of appearances of vertices in the sequence of drawings~\cite{Storyplan,Storyplan2, Storyplan3}.

\subsection{Our results}
We now present our main result, a strong general lower-bound on the uncrossed number.

\begin{restatable}[Uncrossed Number Lower-bound]{theorem}{thmunc}
\label{thm:unc}
    For every $n\geq 3$, for every connected graph $G$ with $n$ vertices and $m$ edges, we have
    \begin{align}
        \mathrm{unc}(G) \geq \left\lceil\frac{m}{3n-6 -\sqrt{2m} +\sqrt{6(n-2)}}\right\rceil.
    \end{align}
\end{restatable}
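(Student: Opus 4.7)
The plan is to prove the upper bound $h(G)\le 3n-6-\sqrt{2m}+\sqrt{6(n-2)}$ on the maximum uncrossed subgraph number, from which the theorem follows immediately: any uncrossed collection of size $\unc(G)$ must cover all $m$ edges, and each drawing contributes at most $h(G)$ uncrossed edges, so $\unc(G)\ge \lceil m/h(G)\rceil$. To establish the bound, I fix a drawing $D$ of $G$ attaining $h(G)=U$ uncrossed edges and let $H$ be its uncrossed subgraph, a planar graph with $n$ vertices and $U$ edges. In the main case where $H$ is $2$-connected, Euler's formula gives $f=U-n+2$ faces of lengths $\ell_1,\dots,\ell_f\ge 3$ with $\sum_i\ell_i=2U$.

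The key geometric observation is that every crossed edge lies entirely inside a single face $F_i$ of $H$, with both endpoints on the boundary cycle of $F_i$ (otherwise it would share a crossing with an edge of $H$, contradicting that $H$-edges are uncrossed). Since $G$ is simple and the $\ell_i$ boundary edges of $F_i$ already belong to $H$, the crossed edges inside $F_i$ are chords of this $\ell_i$-cycle, so their number $k_i$ satisfies $k_i\le\binom{\ell_i}{2}-\ell_i=\ell_i(\ell_i-3)/2$. Setting $s_i:=\ell_i-3\ge 0$ and $S:=\sum_i s_i=2U-3f=3n-6-U$, summation yields $m-U\le\tfrac12\sum_i s_i(s_i+3)=\tfrac12\bigl(\sum_i s_i^2+3S\bigr)$.

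The crucial step is the convexity inequality $\sum_i s_i^2\le(\sum_i s_i)^2=S^2$, valid for nonnegative reals and tight precisely when a single face absorbs all the "face excess" (exactly the extremal configuration realised by optimal drawings of $K_n$). This yields $S^2+S\ge 2m-6(n-2)$. To reach the form stated in the theorem, I use that $n\ge 3$ implies $2\sqrt{6(n-2)}\ge 1$, and thus $S\le 2S\sqrt{6(n-2)}$, so $S^2+2S\sqrt{6(n-2)}+6(n-2)\ge 2m$, i.e.\ $(S+\sqrt{6(n-2)})^2\ge 2m$, giving $S\ge\sqrt{2m}-\sqrt{6(n-2)}$ and hence the claimed bound on $U=3n-6-S$.

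The hard part will be discharging the $2$-connectedness assumption: if $H$ has bridges, cut vertices, or isolated vertices (the latter arising when some vertex of $G$ is incident only to crossed edges), then face boundaries are walks rather than simple cycles, some $\ell_i$ may drop below $3$, and crossed edges can be incident to vertices lying in the interior of a face of $H$. I plan to handle this by a WLOG reduction—arguing that we may assume $H$ is spanning and $2$-connected, potentially by modifying $D$ to fold selected crossed edges into $H$ without decreasing the uncrossed count—or by directly generalising the per-face bound to $k_i\le\binom{v_i}{2}-e_i$ where $v_i$ counts boundary plus interior vertices and $e_i$ the distinct boundary edges, then verifying that the same quadratic inequality in $S$ still holds. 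As a sanity check, the proof is tight for $K_n$: the inequality $S(S+1)\ge n(n-1)-6(n-2)=(n-3)(n-4)$ is met with equality at $S=n-4$, matching Ringel's $h(K_n)=2n-2$.
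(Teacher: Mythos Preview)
Your approach is correct and, once the key per-face inequality is in place, considerably more direct than the paper's. Both arguments start from the same point: take an uncrossed subdrawing $D'$, use \cite[Lemma~7]{Uncrossed2} (the paper's Lemma~\ref{lemma:7_balko}) to assume it is connected, observe that every crossed edge has both endpoints on a common face, and deduce $m-U\le\tfrac12\sum_F |F|(|F|-3)$. From here the paper takes a long detour: it introduces a parameter $k$, proves a ``simple'' bound (Lemma~\ref{lemma:simple_k}) and a ``complex'' quadratic bound (Lemma~\ref{lemma:complex_k}) indexed by $k$, intersects them (Lemma~\ref{lemma:combined_bounds}), and finally optimises over $k$ (Lemma~\ref{lemma:alpha}). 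You instead set $s_F=|F|-3$, $S=\sum_F s_F=3n-6-U$, and apply the one-line estimate $\sum_F s_F^2\le(\sum_F s_F)^2$ together with a completion of the square to reach $(S+\sqrt{6(n-2)})^2\ge 2m$; this bypasses the entire parametric machinery. The ``hard part'' you flag is in fact not hard: you do not need $2$-connectedness, only connectedness. Lemma~\ref{lemma:7_balko} supplies connectedness and the common-face property; then each facial walk has length $|F|\ge v(F)\ge 3$, its boundary contains a cycle (the paper's Observation~\ref{asu:count_once}), so at least $v(F)$ of the $\binom{v(F)}{2}$ vertex pairs on $F$ are already $H$-edges, and monotonicity of $x\mapsto\binom{x}{2}-x$ for $x\ge 3$ gives the per-face bound $\binom{v(F)}{2}-v(F)\le\tfrac12|F|(|F|-3)$ with no $2$-connectivity needed. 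The only missing triviality is the tree case $U=n-1$, which the bound covers easily. What the paper's route buys is a family of intermediate inequalities (Lemma~\ref{lemma:combined_bounds} for each $k$) that may be of independent use; what your route buys is a one-paragraph proof of the main theorem.
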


In particular, for dense graphs with $n$ vertices and $m = \varepsilon n^2$ edges, we have:

\begin{align}\label{eq:dense}
    \mathrm{unc}(G) \geq \left\lceil\frac{m}{\left(3 -\sqrt{2\varepsilon}\right)n + o(n)}\right\rceil = \left\lceil\frac{\varepsilon n}{3 -\sqrt{2\varepsilon} + o(1)}\right\rceil.
\end{align}

\begin{figure}
    \centering
    \includegraphics[width=0.65\linewidth]{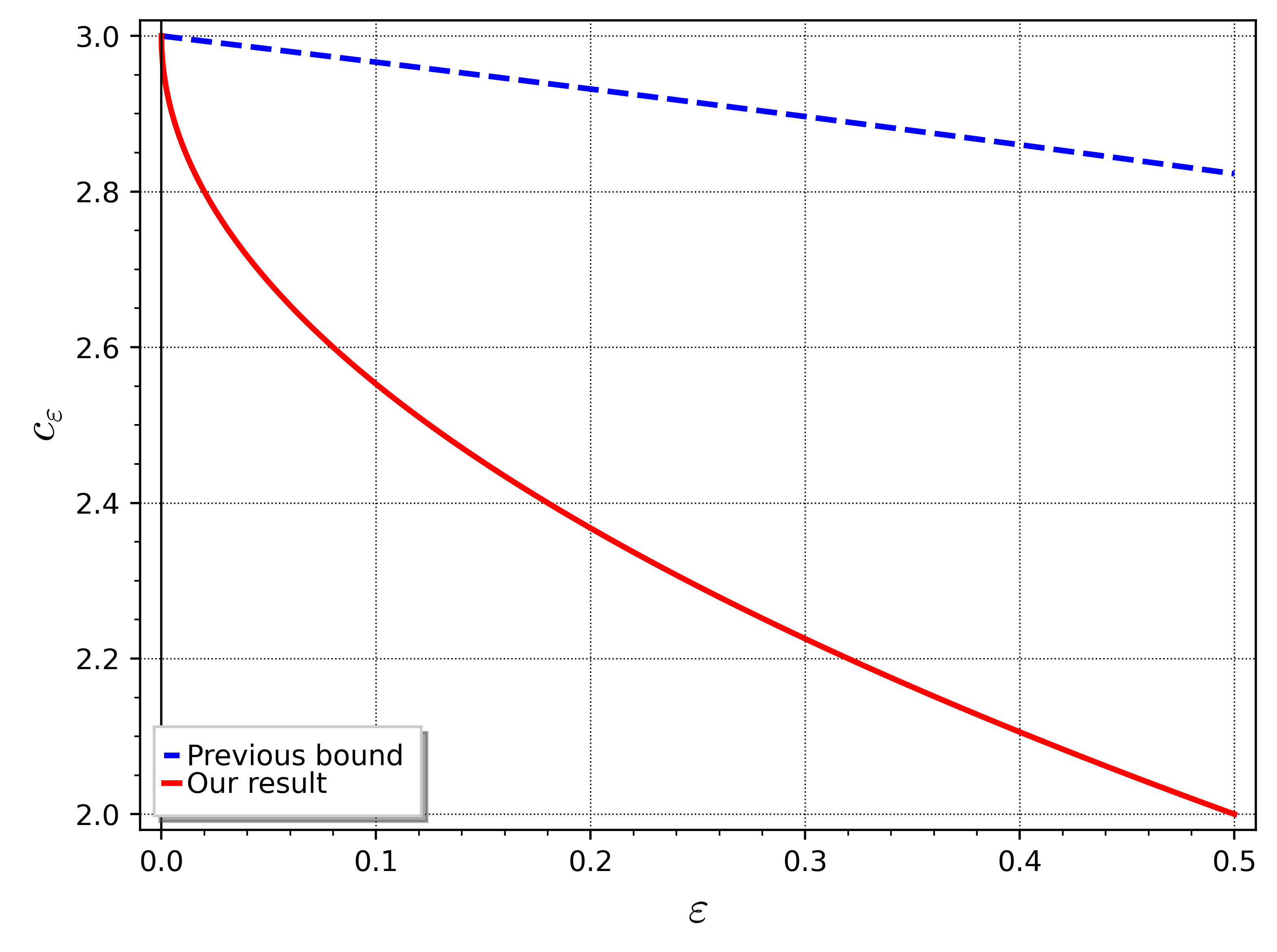}
    \caption{Multiplicative constant $c_\epsilon$ given by \cref{thm:old-lb} (\cite{Uncrossed2}) depicted by dashed blue line and \cref{eq:dense} drawn as solid red line. Note that our values of $c_\varepsilon$ are tight for $\varepsilon$ tending towards $\frac{1}{2}$ (complete graphs) and towards 0 (planar triangulated graphs).}
    \label{fig:asymptotical_ceps}
\end{figure}

We can directly compare \cref{eq:dense} with the previous general bound (\cref{thm:old-lb}), which gives the multiplicative constant $c_\epsilon = \frac{3 + \sqrt{9 - 4 \epsilon}}{2}$ (up to lower-order terms) $\ge 2.82$ (in the bound $\unc(G) \ge \left\lceil\frac{m}{c_\epsilon n}\right\rceil$) while \cref{thm:unc} (see \cref{eq:dense}) has $c_\epsilon=3-\sqrt{2\epsilon}$ (up to lower-order terms), which is substantially stronger.
\cref{fig:asymptotical_ceps} presents a graphic comparison of those multiplicative constants.
We remark that \cref{thm:unc} is asymptotically optimal in the regime when $\epsilon$ tends towards $\frac{1}{2}$.
Recall that Balko, Hliněný, Masařík, Orthaber, Vogtenhuber, and Wagner \cite[Theorem 1]{Uncrossed2} proved the exact bound on the uncrossed number of a complete graph.
For the case $n>7$ the bound is the following:
\begin{align}\label{eq:kn}
    \mathrm{unc}(K_n) = \left\lceil\frac{n-1}{4}\right\rceil.
\end{align}
\cref{eq:kn} shows that the bound in \cref{thm:unc} (see \cref{eq:dense}) is asymptotically tight for complete graphs, as $n\to\infty$, where $m/n^2\to\frac{1}{2}$. %

\ev{
As another consequence of our proof method, we formulate a tighter result for triangle-free graphs.

\begin{restatable}[Uncrossed Number Lower-bound for Triangle-Free Graphs]{theorem}{thmunctfree}
\label{thm:unc_tfree}
    For every $n\geq 3$, for every triangle-free connected graph $G$ with $n$ vertices and $m$ edges, we have
    \begin{align}
        \mathrm{unc}(G) \geq \left\lceil\frac{m}{2n-4 -\sqrt{\frac{m}{2}} +\sqrt{\frac{5}{2}(n-2)}}\right\rceil.
    \end{align}
\end{restatable}
}

We now observe a connection with the maximum uncrossed subgraph number, which was already exploited by~\cite{Uncrossed2}.
Indeed, the total number of uncrossed edges in a collection of $d$ drawings of $G$ is at most $h(G) \times d$. Then, by definition, as every edge of $G$ has to be uncrossed in at least one drawing of an uncrossed collection of drawings of $G$, we have:
\begin{observation}[Uncrossed Number--Maximum Uncrossed Subgraph Connection]
\label{obs:uncH}
\begin{align*}
    \mathrm{unc}(G) \geq \left\lceil \frac{m}{h(G)}  \right \rceil.
\end{align*}
\end{observation}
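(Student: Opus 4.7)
The plan is a straightforward double-counting argument. First I would fix an optimal uncrossed collection $\mathcal{D}$ of $G$, so that $|\mathcal{D}| = \unc(G)$ by definition. For each drawing $D \in \mathcal{D}$, let $U_D \subseteq E(G)$ denote the set of edges that appear uncrossed in $D$. The key structural observation is that $U_D$, viewed as an edge set, is exactly the edge set of an uncrossed subgraph realized inside a single drawing of $G$, so by the very definition of the maximum uncrossed subgraph number we have $|U_D| \leq h(G)$.

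Second, the defining property of an uncrossed collection is that every edge $e \in E(G)$ is uncrossed in at least one $D \in \mathcal{D}$. Hence the family $\{U_D : D \in \mathcal{D}\}$ covers $E(G)$, and a union bound gives
\[ m \;=\; |E(G)| \;=\; \Bigl|\bigcup_{D \in \mathcal{D}} U_D\Bigr| \;\leq\; \sum_{D \in \mathcal{D}} |U_D| \;\leq\; |\mathcal{D}| \cdot h(G) \;=\; \unc(G) \cdot h(G). \]
Rearranging yields $\unc(G) \geq m/h(G)$, and since $\unc(G)$ is an integer the right-hand side may be replaced by its ceiling, producing the claimed inequality.

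There is no real obstacle here: the statement reduces to a single pigeonhole-style observation. The only point to verify carefully is that the uncrossed edges of any one drawing of $G$ really do witness an uncrossed subgraph of the same cardinality, which is immediate from how $h(G)$ is defined. In fact, the same argument would work verbatim for any parameter defined as an upper bound on the ``coverage capacity'' of a single drawing, so the observation is more about the relationship between covers and their building blocks than about graph drawing specifically.
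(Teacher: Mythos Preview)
Your argument is correct and is essentially identical to the paper's own justification: the paper simply notes that in a collection of $d$ drawings the total number of uncrossed edges is at most $d\cdot h(G)$, and since every edge must be uncrossed somewhere, $m \le \unc(G)\cdot h(G)$. There is nothing to add.
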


In fact, we formulate our main theorem in a stronger form in terms of the maximum uncrossed subgraph number.

\begin{restatable}[Maximum Uncrossed Subgraph Number Upper Bound]{theorem}{thmh}
\label{thm:hG}
    For every $n \geq 3$, for every connected graph $G$ with $n$ vertices and $m$ edges, we have:
    \begin{align}
        h(G) \leq 3n - 6 - \sqrt{2m} + \sqrt{6(n-2)}.
        \end{align}
\end{restatable}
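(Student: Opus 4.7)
The plan is to fix a drawing $D$ of $G$ that realizes $h(G)$ uncrossed edges, let $P \subseteq G$ be the planar graph formed by those uncrossed edges, and exploit the structural fact that every crossed edge of $G$ lies entirely inside a single face of $P$ (since it cannot cross any edge of $P$). The first step is a local bound: for a face $\phi$ of $P$ whose boundary is a simple cycle of length $b_\phi$, the crossed edges inside $\phi$ are chords among the $b_\phi$ boundary vertices, and since $G$ is simple and the boundary edges of $\phi$ already belong to $P$, the number $c_\phi$ of crossed edges in $\phi$ is at most $\binom{b_\phi}{2} - b_\phi = \tfrac12 b_\phi(b_\phi - 3)$.

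The second step is to globalize. Writing $u = h(G)$ and $v = 3n - 6 - u$, Euler's formula for a connected 2-connected plane graph with $u$ edges gives $\sum_\phi b_\phi = 2u$ and $\sum_\phi (b_\phi - 3) = 2u - 3(u - n + 2) = v$. Expanding $b(b-3) = (b-3)^2 + 3(b-3)$ and using the elementary inequality $\sum_\phi (b_\phi - 3)^2 \leq \bigl(\sum_\phi (b_\phi - 3)\bigr)^2 = v^2$, which is valid because each summand satisfies $b_\phi - 3 \geq 0$, summing the local bound yields
\[
 2(m - u) \;=\; 2\sum_\phi c_\phi \;\leq\; \sum_\phi b_\phi(b_\phi - 3) \;\leq\; v^2 + 3v,
\]
equivalently, $v^2 + v \geq 2\bigl(m - 3(n-2)\bigr)$.

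The third step is to relax this sharper inequality into the advertised form. For $n \geq 3$ one has $2\sqrt{6(n-2)} \geq 1$, hence $v \leq 2v\sqrt{6(n-2)}$, and therefore
\[
\bigl(v + \sqrt{6(n-2)}\bigr)^2 \;=\; v^2 + 2v\sqrt{6(n-2)} + 6(n-2) \;\geq\; v^2 + v + 6(n-2) \;\geq\; 2m.
\]
Taking square roots gives $v \geq \sqrt{2m} - \sqrt{6(n-2)}$ (non-trivially once $m \geq 3(n-2)$, otherwise the inequality is vacuous and $u \leq 3n-6$ suffices), and substituting $v = 3n - 6 - u$ yields the announced bound $u \leq 3n - 6 - \sqrt{2m} + \sqrt{6(n-2)}$.

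The step I expect to be the hardest is handling the case when $P$ is not 2-connected (or not connected), because then face boundary walks may revisit vertices and traverse bridges twice, so both the per-face estimate $c_\phi \leq \tfrac12 b_\phi(b_\phi-3)$ and the face count used in Euler's formula require correction. I would address this either by sharpening the per-face inequality to count only distinct boundary vertices while tracking bridges separately through the block--cut tree of $P$, or by showing that one may augment $P$ inside its faces with auxiliary uncrossed edges drawn in the empty regions so as to reduce to the 2-connected case while ensuring the bound obtained for the augmented graph implies the desired bound on the original $u$.
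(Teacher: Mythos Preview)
Your core argument is correct and, once the connectivity issue is handled, gives a genuinely shorter and more transparent proof than the paper's. The paper arrives at the same per-face estimate $m-m_i\le \tfrac12\sum_F |F|(|F|-3)$, but then controls the right-hand side through a parameterised family of bounds: a ``simple'' bound $m_i\le \tfrac{k}{k-2}(n-2)+\sum_{\ell<k}\tfrac{k-\ell}{k-2}s_\ell$ and a ``complex'' quadratic bound in $m_i$ depending on the face-size profile $(s_3,\dots,s_{k-1})$, then intersects the two and finally optimises over $k\approx 3/\alpha$ with $\alpha=\sqrt{(3n-6)/m}$. Your single line $\sum_F(|F|-3)^2\le\bigl(\sum_F(|F|-3)\bigr)^2=v^2$ replaces that entire machinery; the resulting quadratic $v^2+v\ge 2m-6(n-2)$ is in fact sharper than the paper's $k=3$ quadratic and immediately yields the theorem after your completing-the-square step. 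So the two proofs share the setup and the per-face count but diverge completely in how they globalise it, and your route is the more elementary one.

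On the gap you flag: the $2$-connectedness assumption is not needed, and the repair is simpler than either of the strategies you sketch. Using \cite[Lemma~7]{Uncrossed2} (the paper's Lemma~\ref{lemma:7_balko}) you may take $P$ connected on all $n$ vertices; dispose of the tree case separately ($u=n-1$ trivially satisfies the bound). Now work with the facial walk length $|F|$ rather than a cycle length $b_\phi$. The identities $\sum_F|F|=2u$ and $\sum_F(|F|-3)=v$ hold for any connected plane graph (bridges are counted twice in the same face), and each $|F|\ge 3$, so the nonnegativity you need for $\sum(|F|-3)^2\le(\sum(|F|-3))^2$ is preserved. For the per-face count, observe that once $P$ is not a tree every face boundary contains at least one edge lying on a cycle, hence the boundary subgraph on the $v(F)$ distinct boundary vertices is connected and contains a cycle, giving at least $v(F)$ edges of $P$ among those vertices; thus $c_F\le\binom{v(F)}{2}-v(F)=\tfrac12 v(F)(v(F)-3)\le\tfrac12|F|(|F|-3)$ since $3\le v(F)\le|F|$ and $t\mapsto t(t-3)$ is increasing for $t\ge 2$. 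With this, your entire computation goes through verbatim with $|F|$ in place of $b_\phi$; no block--cut tree bookkeeping or augmentation argument is required.
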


We can readily see that \cref{thm:unc} is a consequence of \cref{thm:hG} using \cref{obs:uncH}.
It is easy to observe that the bound in \cref{thm:hG} is tight for triangulated planar graphs (with $n$ vertices and $3n-6$ edges).
We show an asymptotically tight (up to lower-order terms) construction for graphs of any density.
We formalize this claim in the following theorem.

\begin{restatable}[Maximum Uncrossed Subgraph Tight Construction]{theorem}{thmtight}
\label{thm:tightness}
For any density $0 < \varepsilon \le \frac{1}{2}-\frac{1}{2n}$ and for any $n\ge \frac{3}{\epsilon}$, there exists a connected graph $G$ on $n$ vertices and $m$ edges such that:
\begin{enumerate}
    \item[1)]\phantomsection \label{c:1} $h(G) \geq 3n-3-\sqrt{2m}$, and
    \item[2)]\phantomsection\label{c:2} $\varepsilon \le \frac{m}{n^2} \le \varepsilon + \frac{1}{n} -\frac{3}{n^2}$.
\end{enumerate}
\end{restatable}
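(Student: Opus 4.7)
The plan is to construct $G$ by inserting a complete graph $K_s$ into one triangular face of a planar triangulation on $n - s + 3$ vertices, where the integer $s$ is chosen as the smallest one satisfying $m(s) := 3n - 3s + \binom{s}{2} \geq \varepsilon n^2$. Concretely, let $T$ be any planar triangulation on $n - s + 3$ vertices, fix a triangular face bounded by $v_1, v_2, v_3$, add $s - 3$ fresh vertices inside that face, and insert every missing edge so that $\{v_1, \ldots, v_s\}$ spans a clique $K_s$. The resulting $G$ is connected, has $n$ vertices, and $|E(T)| + \binom{s}{2} - 3 = m(s)$ edges, since the three boundary edges of the chosen face coincide with three edges of the clique.

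To establish property~1, I would draw $T$ as a plane triangulation with the chosen face reserved to host $K_s$. Using Ringel's exact value $h(K_s) = 2s - 2$ from \cref{thm-germanPaper} together with the vertex-transitivity of $K_s$, I pick an optimal drawing of $K_s$ inside that face in which $v_1, v_2, v_3$ lie on its outer boundary and $2s - 2$ edges of $K_s$ -- including the three outer-face ones -- are uncrossed. Splicing the two drawings introduces no new crossings, so the $|E(T)| - 3 = 3n - 3s$ edges of $T$ outside the face, together with all $2s - 2$ uncrossed edges of $K_s$, remain uncrossed in $G$, giving $h(G) \geq 3n - s - 2$. Comparing with the target $3n - 3 - \sqrt{2m(s)}$ reduces to $(s - 1)^2 \leq 2m(s)$, and substituting $m(s) = 3n - 3s + \binom{s}{2}$ simplifies this to $5s + 1 \leq 6n$, which is satisfied whenever $s \leq n$.

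For property~2, note that consecutive values obey $m(s) - m(s - 1) = s - 4$, which controls how closely a target $\varepsilon n^2$ can be approximated from above. By minimality of $s$, we get $\varepsilon n^2 \leq m(s) < \varepsilon n^2 + (s - 4)$, so property~2 follows immediately once $s \leq n$. The cap $\varepsilon \leq \frac{1}{2} - \frac{1}{2n} = m(n)/n^2$ is calibrated precisely to guarantee $s \leq n$, whereas $n \geq 3/\varepsilon$ forces $\varepsilon n^2 \geq 3n > m(3) = 3n - 6$, ensuring $s \geq 4$ and a nondegenerate construction.

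The main conceptual obstacle is simultaneously matching the narrow density window (of width $\approx 1/n$) and the tight $-\sqrt{2m}$ correction in the $h(G)$ bound. The second issue works out cleanly because Ringel's second-order term $2s - 2$ is exactly what is needed to absorb the quadratic growth $\binom{s}{2} \approx s^2/2$ into a $\sqrt{2m}$-type correction; the first issue becomes delicate exactly as $\varepsilon$ approaches $\tfrac{1}{2}$, since then $s \approx n$ and the gap $s - 4$ between attainable edge counts grows to match the entire width of the allowed density interval -- which is precisely why the theorem's hypothesis excludes the single value $\varepsilon = \tfrac{1}{2}$ by the additional $\tfrac{1}{2n}$ slack.
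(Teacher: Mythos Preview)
Your construction is essentially the paper's: under the reparameterization $s=x+1$, your graph (a triangulation on $n-s+3$ vertices with a $K_s$ glued into one triangular face) is exactly $G_{x,n}$, and your counts $m(s)=3n-3s+\binom{s}{2}$ and $h(G)\ge 3n-s-2$ coincide with the paper's $m=3n-3+\tfrac{x(x-5)}{2}$ and $m'=3n-3-x$. The one step that wants a word more than ``vertex-transitivity'' is the existence of an optimal drawing of $K_s$ whose outer boundary is a \emph{triangle}: this follows from Ringel's structural theorem by re-embedding the wheel $W_s$ so that one of its triangular faces (the hub together with two consecutive rim vertices) is the unbounded face, with the crossed edges still confined to the rim-cycle face.
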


Note that the maximum value of $\epsilon=\frac{1}{2}-\frac{1}{2n}$ in \cref{thm:tightness} is the density of a complete graph.
From Property \hyperref[c:1]{1)}, we can see that for the constructed graph $G$, the difference between $h(G)$ and $3n-6-\sqrt{2m} + \sqrt{6(n-2)}$ is at most $\sqrt{6(n-2)}-3 = o(n)$, so the bound in \cref{thm:hG} is tight up to lower-order terms for such graphs.

In the specific case of triangle-free graphs, we can slightly change the proof from \cref{sec:main} to obtain a better bound for the maximum uncrossed subgraph number:

\begin{restatable}[Maximum Uncrossed Subgraph Number Upper Bound for Triangle-Free Graphs]{theorem}{thmhtfree}
\label{thm:hG-triangle}
    For every $n \geq 3$, for every triangle-free connected graph $G$ with $n$ vertices and $m$ edges, we have:
    \begin{align}
        h(G) \leq 2n - 4 - \sqrt{\frac{m}{2}} + \sqrt{\frac{5}{2}(n-2)}.
        \end{align}
\end{restatable}

Unlike \cref{thm:hG}, this result does not appear to be tight.
For instance, for even $n$, our bound states that $h(K_{\frac{n}{2},\frac{n}{2}}) \leq \left(2-\frac{1}{2\sqrt{2}}\right)n - 4 + \sqrt{\frac{5}{2}(n-2)}$.
However, according to \cref{thm-germanPaper}, $h(K_{\frac{n}{2},\frac{n}{2}}) = \frac{3}{2}n - 2$, which is an asymptotic gap of around $0.14n$. 

According to Mantel's theorem (1907)~\cite{Mantel07}, the maximum number of edges in a triangle-free graph with $n$ vertices is $m = \left\lfloor \frac{n^2}{4} \right\rfloor$ edges, and it is attained by a complete bipartite graph $K_{\frac{n}{2},\frac{n}{2}}$ or $K_{\frac{n-1}{2},\frac{n+1}{2}}$ (depending on the parity of $n$).
This means that \cref{thm:hG-triangle} is not tight for the densest possible triangle-free graphs, and possibly not for other graph densities either.
Nevertheless, similarly to \cref{thm:tightness}, we provide a construction giving a large maximum uncrossed subgraph for every feasible density in the triangle-free setting:
\begin{restatable}[Maximum Uncrossed Subgraph Construction for Triangle-Free Graphs]{theorem}{thmtighttfree}
\label{thm:tightness_tfree}
For any density $0 < \varepsilon \le \frac{1}{4}$ and for any $n \ge \frac{2}{\varepsilon}$, there exists a connected triangle-free graph $G$ on $n$ vertices and $m$ edges such that:
\begin{enumerate}
    \item[1)]\phantomsection \label{ctfree:1} $h(G) \geq 2n-2-\sqrt{m}$, and
    \item[2)]\phantomsection\label{ctfree:2} $\varepsilon - \frac{1}{n} + \frac{5}{n^2} \leq \frac{m}{n^2} \leq \varepsilon$.
\end{enumerate}
\end{restatable}

\subparagraph*{Organization of the Paper.}
We prove \cref{thm:hG} in \cref{sec:main} and the respective construction (\cref{thm:tightness}) in \cref{sec:construct}.
\lv{We prove both improved statements for triangle-free graphs: \cref{thm:hG-triangle} and the respective construction (\cref{thm:tightness_tfree}) in \cref{sec:triangle}.}\sv{We prove \cref{thm:hG-triangle} modulo two technical lemmas deferred to the appendix, and give the construction for \cref{thm:tightness_tfree}, whose proof is deferred to the appendix, in \cref{sec:triangle}.}
We end this paper with a short conclusion (\cref{sec:conc}).

\subsection{Preliminaries}\label{sec:prelim}
We work with simple, undirected, connected graphs throughout the paper. For $n\ge 4$, let $W_n$ be the wheel graph on $n$ vertices, that is, the graph obtained from a cycle on $n-1$ vertices by adding a universal vertex. See \cref{fig-W6} for an illustration.

\subparagraph*{Drawings in the Plane.}
In this paragraph, we give the formal details of drawings of graphs in the plane.
We follow a standard definition of a \emph{(plane) drawing} of a graph $G$:
The vertices are represented by distinct points in the plane, and each edge corresponds to a simple continuous arc connecting the images of its end-vertices.
As usual, we identify the vertices and their images, as well as the edges and the arcs representing them.
We require that the edges pass through no vertices other than their endpoints. 
Moreover, two edges sharing an endpoint do not intersect except at their common endpoint.
We assume for simplicity that any two non-adjacent edges have only finitely many points in common, no two edges touch at an interior point, and no three edges meet at a common interior point.
Thus, every common interior point of two edges is a proper crossing.
As this paper does not consider anything other than drawings in the plane, we will sometimes omit the plane adjective.

\section{Construction---Proof of \cref{thm:tightness}}\label{sec:construct}

In this section, we provide a construction showing that the bound given in \cref{thm:hG} is tight up to lower-order terms for every feasible density $0 < \varepsilon \le \frac{1}{2}-\frac{1}{2n}$.

\lv{\thmtight*}

To prove \cref{thm:tightness}, we will use a construction, inspired by Ringel's following result on complete graphs \cite{ringel64}, where $DW_n$ is a planar drawing of the wheel graph $W_n$ on $n$ vertices; see \cref{fig-W6} and \cref{sec:prelim} for the definition of the wheel graph.

\begin {figure}%
\begin{center}
\begin{tikzpicture}[scale=0.8]\small
\coordinate (uu) at (0,0);
\tikzstyle{every node}=[draw, color=black, shape=circle, inner sep=1.3pt, fill=white]
\foreach \u in {18,90,...,378} { \node[thick] (W\u) at (\u:2) {}; }
\node[thick] at (uu) {};
\tikzstyle{every path}=[draw, color=black, semithick]
\begin{scope}[on background layer]
\foreach \u in {18,90,...,378} {   
        \draw (0,0)--(W\u);  \draw (uu)--(W\u);  \coordinate (uu) at (W\u);
}
\end{scope}[on background layer]
\end{tikzpicture}
\end{center}
\caption{The wheel graph on six vertices $W_6$; see \cref{sec:prelim} for the formal definition.}
\label{fig-W6}
\end{figure}

\begin{theorem}[\cite{ringel64}]\label{thm:DW}
    For every integer $n \geq 4$, we have $h(K_n) = 2n-2$. Moreover, if $D$ is a drawing of $K_n$ with $2n-2$ uncrossed edges, then $D$ contains the drawing $DW_n$ with all edges from $D \setminus DW_n$ being drawn in the outer face of $DW_n$. 
\end{theorem}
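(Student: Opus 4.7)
The plan is to prove the upper bound $h(K_n) \le 2n-2$ by a double-counting argument over the faces of the planar subgraph of uncrossed edges, realize the matching lower bound via the drawing $DW_n$, and then extract the structural characterization from the equality case of the counting inequality.

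Given a drawing $D$ of $K_n$, let $U$ be the plane subgraph of its uncrossed edges. Since no edge of $K_n$ crosses an edge of $U$, every edge of $K_n$ is confined to a single face of $U$ (if not in $U$) or bounds exactly two faces of $U$ (if in $U$). In particular, the two endpoints of every edge of $K_n$ lie on the boundary of some common face of $U$. Writing $\nu_f$ for the number of distinct vertices on the boundary walk of face $f$, this double-count yields
\begin{align*}
\sum_f \binom{\nu_f}{2} \;\ge\; 2|E(U)| + \binom{n}{2} - |E(U)| \;=\; |E(U)| + \binom{n}{2}.
\end{align*}
When $U$ is $2$-connected each face boundary is a simple cycle of length $\nu_f = \ell_f \ge 3$, and Euler's formula gives $u - n + 2$ faces with $\sum_f \ell_f = 2u$, where $u := |E(U)|$. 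Maximizing $\sum_f \binom{\nu_f}{2}$ under these constraints is a convex optimization whose extremum, by convexity of $\binom{\cdot}{2}$, is attained when $u - n + 1$ faces are triangles and one face has length $3n - u - 3$. Substituting this extremal profile back and setting $t := u - (2n-2)$ reduces the required inequality to $t(t - 2n + 7) \ge 0$, which fails for every $t \ge 1$ once $n \ge 5$. Hence $u \le 2n - 2$ in the $2$-connected case; the small cases $n = 4$ are immediate since $K_4$ is planar. For $U$ that is not $2$-connected, the inequality $\nu_f \le \ell_f$ becomes strict at bridges and cut vertices (and additional components are absorbed into their surrounding face), which only decreases the attainable maximum of $\sum_f \binom{\nu_f}{2}$ and hence only tightens the bound on $u$.

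The matching lower bound is witnessed by $DW_n$: it already has $2n - 2$ uncrossed edges (the $n-1$ spokes and the $n-1$ rim edges), and each remaining edge of $K_n$ is a chord of the rim cycle, which can be routed through the unbounded face without crossing any $W_n$-edge. For the structural claim, if $u = 2n - 2$ then both the convex maximization and the double-counting above must hold with equality. The former forces the face profile of $U$ to consist of exactly $n - 1$ triangles together with one face of length $n - 1$; the latter forces every non-$U$ pair of vertices to share exactly one common face, and every $U$-edge to touch exactly its two incident faces. Since the three pairs on any triangular face are already $U$-edges, every non-$U$ pair must sit on the long face; hence the unique vertex $v^\ast$ absent from the long face must be connected in $U$ to all other $n - 1$ vertices (otherwise some non-$U$ edge incident to $v^\ast$ would have no face to inhabit), which places each of the $n - 1$ triangles at $v^\ast$ and forces the remaining $n - 1$ vertices to form the rim cycle bounding the large face. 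This identifies $U$ with the embedded wheel $DW_n$, and each edge of $D \setminus U$ must be drawn in the unique common face of its two endpoints, which is necessarily the outer face of $DW_n$.

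The main obstacle I anticipate is twofold: (i) the careful bookkeeping for non-$2$-connected $U$, where Euler's formula, the discrepancy between $\nu_f$ and $\ell_f$, and possibly disconnected components must be handled in a unified way to preserve the counting inequality; and (ii) extracting the full uniqueness of the plane wheel structure from the extremal face profile, where one must argue, from planarity together with the equality constraints $p_{xy} = 2$ for $xy \in U$ and $p_{xy} = 1$ otherwise, that $v^\ast$ is indeed adjacent to every other vertex and that the triangles at $v^\ast$ are cyclically arranged so that their outer sides stitch together into precisely the rim cycle of $DW_n$.
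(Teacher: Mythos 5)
The paper offers no proof of this statement to compare against: it is Ringel's 1964 result, imported verbatim with a citation, and is used in Section~2 only as a black box motivating the construction of $G_{x,n}$. Your face-counting strategy is nevertheless close in spirit to the machinery the paper builds for its own main theorem --- the inequality $m - m_i \leq \sum_{F}\bigl(\binom{v(F)}{2} - v(F)\bigr)$ in \cref{lemma:complex_k} is exactly your double count --- so the outline is sound and the arithmetic at the heart of it (equality at $u = 2n-2$, face profile of $n-1$ triangles plus one $(n-1)$-face) checks out.

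There are, however, genuine gaps as written. First, the whole quantitative argument assumes the uncrossed subgraph $U$ is spanning and $2$-connected, and your reduction of the general case is only asserted. The claim that degeneracies ``only decrease the attainable maximum'' is not one-sided: a bridge of $U$ lies on the boundary walk of a single face, so it contributes $1$ rather than $2$ to the left side of $\sum_f \binom{\nu_f}{2} \geq 2|E(U)| + \binom{n}{2} - |E(U)|$, meaning the \emph{required} lower bound weakens at the same time as the upper bound; and a disconnected $U$ changes Euler's formula to $n - u + F = 1 + c$. You must track the loss in $\nu_f$ versus $\ell_f$ against the loss in edge--face incidences face by face; this is precisely the bookkeeping that \cite[Lemma~7]{Uncrossed2} (\cref{lemma:7_balko} here) exists to avoid, by passing to a connected uncrossed subdrawing. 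Second, the statement that $t(t-2n+7)\geq 0$ ``fails for every $t \geq 1$'' is false for $t \geq 2n-7$; you need to add that planarity of $U$ (equivalently, nonnegativity of the long face length $3n-u-3$) forces $t \leq n-4 < 2n-7$, after which the product is indeed negative. Third, the equality analysis --- that every triangle meets $v^\ast$, that the remaining $n-1$ vertices carry exactly a Hamiltonian cycle of the long face, and that no non-$2$-connected configuration achieves equality --- is only sketched; the degree count ($v^\ast$ has $U$-degree $n-1$, leaving total degree $3n-3$ on the rim, i.e.\ exactly the cycle plus the spokes) should be made explicit. None of this looks fatal, but as it stands the hardest cases are deferred rather than proved.
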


\subparagraph*{Construction of $G_{x,n}$ and $DG_{x,n}$.}\phantomsection\label{constr}
Given $n$ and $n-1\ge x\ge 3$, we construct $DG_{x,n}$ starting from a drawing of $K_{x+1}$ provided by \cref{thm:DW}, whose uncrossed edges form a drawing $DW_{x+1}$ of the wheel $W_{x+1}$.
Equivalently, this drawing is obtained from $DW_{x+1}$ by adding all missing edges between the $x$ vertices incident with the outer face; these outer vertices induce a copy of $K_x$, and together with the universal vertex they form $K_{x+1}$.
The remaining $n-x-1$ vertices are inserted to $DG_{x,n}$ in the following way:
Pick any triangle $T$ in the planar part of $DG_{x,n}$ and add a vertex inside this face.
Then add an edge between the new vertex and each of the three vertices forming $T$. Since these operations are performed inside triangular faces of the planar part, all newly added edges are uncrossed, and the planar part inside the outer cycle remains triangulated.
We apply this transformation until we obtain a graph with $n$ vertices.
Let $G_{x,n}$ denote the graph represented by the resulting drawing $DG_{x,n}$.
\cref{fig-W6} presents an example of $DW_6$ and \cref{fig-Gxn} provides an example of $DG_{5,11}$.
\begin{figure}
\begin{center}
\begin{tikzpicture}[scale=0.8]\small
\coordinate (uu) at (0,0);
\tikzstyle{every node}=[draw, color=black, shape=circle, inner sep=1.3pt, fill=white]
\foreach \u in {18,90,...,378} { \node[thick] (W\u) at (\u:2) {}; }
\foreach \u in {54,126,...,414} { \node[thick] (W\u) at (\u:1) {}; }
\node[thick] at (uu) {};
\tikzstyle{every path}=[draw, color=black, semithick]
\coordinate (W450) at (W90);
\begin{scope}[on background layer]
\foreach \u in {18,90,...,378} {   
        \draw (0,0)--(W\u);  \draw (uu)--(W\u);  \coordinate (uu) at (W\u);
}
\foreach \mid [evaluate=\mid as \next using int(\mid+36)] [evaluate=\mid as \prev using int(\mid-36)]in {54,126,...,414} {
       \draw (0,0)--(W\mid); \draw (W\mid)--(W\next); \draw (W\mid)--(W\prev);
}

\foreach \u [evaluate=\u as \v using {int(mod(\u+144,360))}] in {18,90,...,378}{
    \draw[dotted] (W\u) to [out=\u+90, in=\v-90, looseness = 1.6] (W\v);
}

\end{scope}[on background layer]
\end{tikzpicture}
\end{center}
\vspace{-3em}
\caption{An example of $DG_{5,11}$. %
Crossed edges are depicted using dotted lines. %
See the \hyperref[constr]{construction} paragraph for the formal description.}
\label{fig-Gxn}
\end{figure}

\begin{proof}[Proof of \cref{thm:tightness}]
We take $G_{x,n}$ as defined in the \hyperref[constr]{construction} paragraph for the appropriately chosen $x$ (based on $n$ and $\epsilon$), which is computed later in the proof.
The graph $G_{x,n}$ is connected, since it contains the connected graph $K_{x+1}$ and every added vertex is adjacent to three existing vertices.
Let $D$ be the subdrawing of $DG_{x,n}$ consisting of the wheel edges of $DW_{x+1}$ together with all edges added when inserting the remaining $n-x-1$ vertices.
Let $m'$ be the number of edges of $D$.
With the construction of $DG_{x,n}$, $m'$ is equal to the sum of the number of uncrossed edges in $DW_{x+1}$ and the number of added edges to obtain $G_{x,n}$.
We then have:
\begin{itemize}
    \item $m' = 2x + 3(n-x-1)$
    \item $m = \frac{x(x+1)}{2} + 3(n-x-1)$
\end{itemize}

Hence, we have $m' = 3n-3-x$ and  
\begin{align}\label{m-bound}
      m &= 3n-3 + \frac{x(x-5)}{2}.
\end{align}

Since $x\le n-1$, we have
\lv{\[}\sv{$}
6(n-1)-5x \ge n-1 \ge 0.
\sv{$}\lv{\]}
Therefore,
\begin{align}\label{eq:mx}
    2m = 6(n-1)+x(x-5) \ge x^2,
\end{align}
and hence $\sqrt{2m}\ge x$.
\lv{%

}%
Then, we combine \cref{eq:mx} with $m'\leq h(G_{x,n})$ and the following proves Property \hyperref[c:1]{1)} of \cref{thm:tightness}:

\begin{align}
\label{ineq:lower_hG}
3n-3-\sqrt{2m} \leq 3n-3-x \leq h(G_{x,n}). 
\end{align}

Our next step is to choose an appropriate value of $x$ to approach the desired density.
With $m = \varepsilon n^2$ in \cref{m-bound}, we obtain the following root for $x$:
\begin{align}\label{eq:root}
    x_0 = \frac{5}{2} + \sqrt{\left(\frac{5}{2}\right)^2 + 2 \left(\varepsilon n^2 - 3(n-1)\right)}.
\end{align}
Recall that $n \geq \frac{3}{\varepsilon}$ implies $\varepsilon n^2-3(n-1)\ge 3$, and hence $x_0\ge 6$.
The condition $\varepsilon \leq \frac{n-1}{2n}$ ensures that $x_0 \leq n-1$.
\lv{%

}%
Let $f(t)=3n-3+\frac{t(t-5)}{2}$.
Thus $f$ is increasing on $[x_0,\infty)$.
If we choose $x = \lceil x_0 \rceil$, then $x\ge 6$ and $x\le n-1$.
Hence, $x$ is admissible for the construction.
Since $x=\lceil x_0\rceil\ge x_0$, we have
\[
    m=f(x)\ge f(x_0)=\varepsilon n^2.
\]
Moreover, since $x<x_0+1$ and $f$ is increasing on $[x_0,\infty)$, we obtain
\begin{align*}
  m&=f(x) < f(x_0+1) = 3n-3+\frac{(x_0+1)(x_0-4)}{2} = 3n-3+\frac{x_0(x_0-5)}{2}+x_0-2 \\&= \varepsilon n^2+x_0-2 \le \varepsilon n^2+n-3.
\end{align*}
Therefore, we have:
\begin{align}\label{e-bound}
    \varepsilon \leq \frac{m}{n^2} < \varepsilon + \frac{1}{n} - \frac{3}{n^2}.
\end{align}

The proof follows by \cref{ineq:lower_hG} and \cref{e-bound}.
\end{proof}

\section{Proof of \cref{thm:hG}}\label{sec:main}

This section is dedicated to the proof of our main theorem.

\thmh*

We say that $D'$ is an \textit{uncrossed subdrawing} of a graph $G$ if there exists a drawing $D$ of $G$ such that $D'$ consists of all vertices of $D$ and a subset of the uncrossed edges of $D$. Note that $h(G)$ can be alternatively defined as the maximum number of edges of an uncrossed subdrawing of $G$.

The proof can be decomposed into three main steps. We first consider all faces up to a certain length $k-1$ of an uncrossed subdrawing with the maximum number of edges. By doing so, we obtain two bounds on the value of $h(G)$, \cref{lemma:simple_k} and \cref{lemma:complex_k}, defining an area of possible values. \cref{lemma:combined_bounds} then provides a bound over that area. We finish the proof by finding a good value for $k$.
We will use the following lemma from Balko, Hliněný, Masařík, Orthaber, Vogtenhuber, and Wagner \cite{Uncrossed2}:
\begin{lemma}[{\cite[Lemma 7]{Uncrossed2}}]
\label{lemma:7_balko}
    Let $D'$ be an uncrossed subdrawing of a connected graph $G$. Then $D'$ is a planar drawing and, for every edge $\{u,v\}$ of $G$, the vertices $u$ and $v$ are incident to a common face of $D'$. Moreover, there is an uncrossed subdrawing $D''$ of $G$ such that $D''$ represents a connected supergraph of the graph represented by $D'$.
\end{lemma}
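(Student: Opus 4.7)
The lemma naturally decomposes into three claims, which I plan to address in turn. For the first, that $D'$ is a planar drawing, the argument is essentially immediate: every edge of $D'$ is, by hypothesis, uncrossed in the parent drawing $D$ of $G$, so in particular no two edges of $D'$ cross one another. For the second claim, fix an edge $\{u,v\}\in E(G)$ and let $c$ denote the curve representing it in $D$. Since the edges of $D'$ are uncrossed in $D$, the curve $c$ cannot cross any edge of $D'$, and also $c$ passes through no vertex other than its endpoints. Hence the interior of $c$ lies in the complement of $V(D')\cup E(D')$ in the plane, which is a disjoint union of open faces of $D'$. As this interior is connected, it lies entirely in a single open face $f$, so $u$ and $v$ both lie on the boundary of $f$.

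For the third claim, I plan to proceed by induction on the number of connected components of the graph represented by $D'$. If it is already connected, take $D''=D'$. Otherwise, since $G$ is connected, there exists an edge $e=\{u,w\}\in E(G)$ with $u$ and $w$ in two distinct components $C_1,C_2$ of $D'$. By the second claim, $u$ and $w$ lie on the boundary of a common face $f$ of $D'$; draw $e$ as a simple curve inside $f$ from $u$ to $w$, producing an enlarged planar subdrawing $\tilde D' := D'\cup\{e\}$. The key topological observation is that, because $u$ and $w$ belong to different connected components of $D'$, they lie on distinct boundary walks of $f$, and adding $e$ therefore merges these two boundary walks into one \emph{without} subdividing $f$ into two faces. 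Consequently every face of $D'$ persists (with at most a modified boundary walk) in $\tilde D'$, and by the second claim every edge of $G$ still has both endpoints on the boundary of a common face of $\tilde D'$.

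To certify that $\tilde D'$ is itself an uncrossed subdrawing of $G$, I construct a drawing $D^{\ast}$ of $G$ by keeping the vertex positions and the edges of $\tilde D'$ unchanged, and routing each remaining edge $\{a,b\}\in E(G)$ as a simple curve inside a face of $\tilde D'$ whose boundary contains both $a$ and $b$ (with small perturbations to enforce the usual general-position conventions). By construction, every edge of $\tilde D'$ is uncrossed in $D^{\ast}$, so $\tilde D'$ is indeed an uncrossed subdrawing of $G$. Since the graph of $\tilde D'$ has one fewer connected component than that of $D'$, iterating this step finitely many times yields the claimed $D''$. I expect the main subtlety to be precisely the face-merging observation above, namely verifying that joining two distinct components inside a single face does not split that face and thereby preserves, for the larger subdrawing, the common-face property established in the second claim; the remaining steps are routine topological bookkeeping.
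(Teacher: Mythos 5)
The paper does not prove this lemma at all: it is imported verbatim as \cite[Lemma~7]{Uncrossed2}, so there is no in-paper proof to compare against. Your argument is a correct, self-contained proof along the standard lines one would expect for this statement: planarity of $D'$ is immediate from uncrossedness, the common-face property follows from connectedness of the interior of the arc representing $\{u,v\}$ inside the open complement of $D'$, and the connectivity claim follows by repeatedly inserting a $G$-edge between two components inside a shared face. The one step that carries real content --- that an arc joining two \emph{distinct} boundary components of a face does not split that face, so the common-face property survives the augmentation and the enlarged subdrawing can again be certified as uncrossed by rerouting the remaining edges of $G$ inside faces --- is exactly the right observation and you justify it correctly. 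Two trivial points worth a half-sentence each if you write this up: in the common-face argument you should dispose separately of the case where $\{u,v\}$ is itself an edge of $D'$ (its interior is then not in the complement of $D'$, but the claim is immediate there), and you should note that the re-routed edges of $G\setminus \tilde D'$ may cross one another, which is harmless since only the edges of $\tilde D'$ need to be uncrossed in the witnessing drawing.
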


Let $G$ be a connected graph with $n\ge 3$ vertices and $m$ edges.
Let $\tilde{D}$ be a drawing of $G$ such that $\tilde{D}$ contains $h(G)$ uncrossed edges.
Let $\tilde{D}'$ be the subdrawing of $\tilde{D}$ representing the subgraph of $G$ made of all uncrossed edges of $\tilde{D}$. Let $\tilde{m}$ be the number of uncrossed edges in $\tilde{D}$, which is also the number of edges of $\tilde{D}'$. We thus have $\tilde{m} = h(G)$. 
The subdrawing $\tilde{D}'$ is planar because its edges are uncrossed in $\tilde{D}$.
Moreover, by \cref{lemma:7_balko}, there is an uncrossed subdrawing $D''$ of $G$ representing a connected supergraph of the graph represented by $\tilde{D}'$.
Since $\tilde{D}'$ already has $h(G)$ edges, this supergraph cannot contain any additional edge.
Thus, $\tilde{D}'$ represents a connected plane graph.
Let $\mathcal{F}$ be the set of faces of $\tilde{D}'$, and $f = |\mathcal{F}|$. For each face $F$ of $\tilde{D}'$, $v(F)$ is the number of vertices of $\tilde{D}'$ incident with $F$, and $|F|$ is the length of the facial walk.
For every $k,\ell \geq 3$, we also define $\mathcal{F}_{\ell} = \{F \in \mathcal{F}, |F| = \ell\}$, $f_\ell = |\mathcal{F}_{\ell}|$, and \[\mathcal{F}_{\geq k} = \{F \in \mathcal{F}, |F| \geq k\} = \mathcal{F} \setminus \left(\bigcup_{\ell = 3}^{k-1} \mathcal{F}_{\ell}\right). \]

First, since $\tilde{D}'$ is connected, for each face $F$, we have $v(F) \leq |F|$, as we can count either one or both sides of each edge in $|F|$. If $\tilde{D}'$ is a tree, then $\tilde{m}=h(G)=n-1$, and the desired bound follows immediately from $m\le \binom{n}{2}$.
Thus, we may assume from now on that $\tilde{D}'$ is not a tree.
Thus, we can make the following assumption.
\begin{observation}
\label{asu:count_once}
Suppose $\tilde{D}'$ is not a tree. Then, for each face $F$ of $\tilde{D}'$, at least one edge $e$ on the facial walk of $F$ is counted only once in $|F|$. In particular, the facial walk of $F$ contains a cycle containing $e$ (hence, the two sides of this edge $e$ are on different faces).
\end{observation}

Note that for any face $F\in \mathcal{F}$, $v(F) \geq 3$, since by \cref{asu:count_once} the facial walk of $F$ contains a cycle, and $G$ is simple.
We also note the following simple fact.

\begin{observation}
\label{obs:sum_faces}
\sv{ $\sum_{F \in \mathcal{F}} |F| = 2\tilde{m}$.}
\lv{\begin{align*}
    \sum_{F \in \mathcal{F}} |F| = 2\tilde{m}.
\end{align*}
}
\end{observation}

We obtain a simple and useful bound on $\tilde{m}$ depending on the $f_\ell$ values directly from Euler's formula.
\sv{Note that the case $k=3$ corresponds to the trivial bound $\tilde{m} \leq 3n-6$.
  Its proof is standard and postponed to \Cref{sec:app}\footnote{We mark the statements whose proof is postponed to Appendix~\ref{sec:app} by $\clubsuit$ symbol.}.
}

\begin{lemma}[Simple bound\sv{ $\clubsuit$}] %
\label{lemma:simple_k}
For every $k \geq 3$, we have:

\begin{align*}
    \tilde{m} \leq \frac{k}{k-2}n - \frac{2k}{k-2} + \sum_{\ell = 3}^{k-1}\frac{k-\ell}{k-2} f_\ell.
\end{align*}
\end{lemma}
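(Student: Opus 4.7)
The plan is to adapt the classical derivation of the inequality $m \leq 3n-6$ for planar graphs into a refined version that rewards the existence of faces with specific short lengths. The standard proof combines Euler's formula with the uniform bound $|F| \geq 3$ for every face; here, instead, I would keep track of faces of each length $\ell \in \{3, \ldots, k-1\}$ exactly and apply the weaker bound $|F| \geq k$ only to the remaining (large) faces. This yields a strictly stronger inequality whose slack terms are precisely the $s_\ell$ contributions appearing on the right-hand side of the statement.

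Concretely, I would start from two identities. First, by \cref{lemma:7_balko} the subdrawing $D'_i$ is a planar connected graph, so Euler's formula gives $f_i = m_i - n + 2$. Second, \cref{obs:sum_faces} states $\sum_{F \in \mathcal{F}_i} |F| = 2m_i$. Partitioning the face sum according to whether a face has length $< k$ or $\geq k$, and lower-bounding each face in $\mathcal{F}_{i_{\geq k}}$ by $k$, gives
\begin{align*}
    2m_i \;\geq\; \sum_{\ell=3}^{k-1} \ell\, s_\ell + k\Bigl(f_i - \sum_{\ell=3}^{k-1} s_\ell\Bigr) \;=\; k f_i - \sum_{\ell=3}^{k-1} (k-\ell)\, s_\ell.
\end{align*}
Substituting $f_i = m_i - n + 2$ and isolating $m_i$ (using that $k \geq 3$, so $k-2 \geq 1 > 0$) produces exactly the claimed inequality. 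As a sanity check, the case $k=3$ collapses the sum and recovers the classical planar bound $m_i \leq 3n-6$.

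There is no real obstacle beyond careful bookkeeping. The only subtle point is the implicit use of $|F| \geq 3$ for every face, which is needed so that the face lengths partition cleanly into $\{3, \ldots, k-1\}$ and $[k, \infty)$. This holds because $G$ (hence $D'_i$) is simple and $n \geq 3$; the degenerate scenario where $D'_i$ could have a facial walk double-counting all its edges corresponds to $G$ being a tree, and that case was already disposed of just before the lemma statement via \cref{asu:count_once}, where the target upper bound on $h(G)$ was verified directly.
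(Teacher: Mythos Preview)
Your proposal is correct and follows essentially the same argument as the paper: partition the face-length sum from \cref{obs:sum_faces} into small faces (counted exactly) and large faces (lower-bounded by $k$), then substitute Euler's formula $f_i = m_i - n + 2$ and solve for $m_i$. The paper's proof is identical step for step, including the remark that $k=3$ recovers the planar bound $m_i \leq 3n-6$.
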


\lv{Note that the case $k=3$ corresponds to the trivial bound$\tilde{m} \leq 3n-6$.}

\toappendix{
  \begin{proof}[Proof of \cref{lemma:simple_k}]
Using \cref{obs:sum_faces}, we get: 

\begin{align*}
    \sum_{F \in \mathcal{F}_{\geq k}}|F| = 2\tilde{m}-\sum_{\ell = 3}^{k-1} \ell f_\ell.
\end{align*}

Every face in $\mathcal{F}_{\geq k}$ has by definition a size of at least $k$.

Thus, we deduce the following:

\begin{align}
\label{ineq:fi_k}
    k\left(f - \sum_{\ell=3}^{k-1} f_\ell\right) \leq 2\tilde{m}-\sum_{\ell = 3}^{k-1} \ell f_\ell.
\end{align}

By plugging Euler's formula $n - \tilde{m} + f = 2$ in \cref{ineq:fi_k}, we get the final bound: 

\begin{align*}
    \tilde{m} \leq \frac{k}{k-2}n - \frac{2k}{k-2} + \sum_{\ell = 3}^{k-1}\frac{k-\ell}{k-2} f_\ell. &\qedhere
\end{align*}
\end{proof}
}

We also make use of the following condition on the $f_\ell$ values:

\begin{lemma}
    
\label{lemma:ineq_sum_fl}
For every $k \geq 4$, 
    \begin{align*}
        \sum_{\ell = 3}^{k-1} (\ell - 2) f_\ell \leq 2n - 4.
    \end{align*}
  \end{lemma}

\begin{proof}
By \cref{asu:count_once}, every face has length at least $3$ and at most $2n-3$.
We can express $\tilde{m}$ and $f$ in terms of values of $f_\ell$: 
$\tilde{m} = (\sum_{\ell = 3}^{2n-3} \ell f_\ell)/2$
and 
    $f = \sum_{\ell = 3}^{2n-3} f_\ell$.
We can thus rewrite Euler's formula $n - \tilde{m} + f = 2$ as:
\begin{align}
\label{eq:euler_inf}
    \sum_{\ell = 3}^{2n-3} (\ell - 2) f_\ell = 2n - 4.
\end{align}

Since for any $\ell \geq 3$, $(\ell - 2) f_\ell \geq 0$, \cref{eq:euler_inf} gives the conclusion. 
\end{proof}

We then analyze a more complex bound, which is a generalization of a bound presented by Balko, Hliněný, Masařík, Orthaber, Vogtenhuber, and Wagner~\cite{Uncrossed2}: 
\begin{lemma}[Complex bound]
\label{lemma:complex_k}
Let $G$ be a connected graph with $n$ vertices and $m$ edges, where $m\ge n-1 \ge 2$.
For every integer $k\ge 3$, we have
\begin{equation*}
    b_k^-(n,m,f_3,\dots,f_{k-1}) \leq \tilde{m} \leq b_k^+(n,m,f_3,\dots,f_{k-1})
\end{equation*}
where
\begin{align*}
  &b_k^-(n,m,f_3,\dots,f_{k-1}) = \frac{3n - 5 +\sum_{\ell=3}^{k-1}\left(3-\frac{\ell}{2}\right)f_\ell - \sqrt{\Delta_k}}{2}, \\
  &b_k^+(n,m,f_3,\dots,f_{k-1}) = \frac{3n - 5 +\sum_{\ell=3}^{k-1}\left(3-\frac{\ell}{2}\right)f_\ell + \sqrt{\Delta_k}}{2},\\
  &\Delta_k = \left(3n-7-\frac{3}{2}\sum_{\ell = 3}^{k-1}(\ell-2)f_\ell \right)^2- 4 \left(m - 3n + 6 -\sum_{\ell = 3}^{k-1}\frac{(\ell-2)(\ell-3)}{2} f_\ell \right).
\end{align*}

If $m \leq 3n - 6 + \sum_{\ell = 3}^{k-1} \frac{(\ell-2)(\ell-3)}{2} f_\ell$, then $\Delta_k$ is nonnegative.

Otherwise, the following must hold:
\begin{align}
\label{ineq:complex_sum_faces}
\sum_{\ell = 3}^{k-1}(\ell-2) f_\ell \leq 2n - \frac{14}{3} - \frac{4}{3}\sqrt{m - 3n + 6 - \sum_{\ell = 3}^{k-1} \frac{(\ell-2)(\ell-3)}{2} f_\ell}.
 \end{align}   
\end{lemma}

Note that with $k = 3$, $b_3^+(n,m) = \frac{3n-5 + \sqrt{(3n-5)^2-4m}}{2}$, which is the bound presented in~\cite{Uncrossed2}.

\begin{proof}
We rewrite \cref{obs:sum_faces} as follows:
\begin{align}
\label{sum_faces_length_k}
    \sum_{F\in\mathcal{F}_{\geq k}} |F| = 2 \tilde{m} - \sum_{\ell = 3}^{k-1} \ell f_\ell.
\end{align}

Then, for each face $F$, there are $\binom{v(F)}{2}$ pairs of vertices incident with that face.
The subgraph of $\tilde{D}'$ traced by the facial walk of $F$ is connected on these $v(F)$ vertices, and by \cref{asu:count_once} it contains a cycle.
Hence, this subgraph has at least $v(F)$ edges, so at least $v(F)$ of the above pairs already form edges of $\tilde{D}'$.

Every edge of $\tilde{D}$ that is not in $\tilde{D}'$ has its endpoints incident with a common face of $\tilde{D}'$ by \cref{lemma:7_balko}. Moreover, since the edges of $\tilde{D}'$ are uncrossed in $\tilde{D}$, such an edge cannot cross the boundary of any face of $\tilde{D}'$. Hence, it is contained in a single face of $\tilde{D}'$.
For each face $F\in\mathcal{F}$, let $e_F$ be the number of edges of $E(G)\setminus E(\tilde{D}')$ drawn in $F$.
By the previous paragraph, each edge in $E(G)\setminus E(\tilde{D}')$ lies in a unique face of $\tilde{D}'$, so it is counted exactly once.
Thus,
\begin{align}
\label{ineq:vf}
    m-\tilde{m}
    = \sum_{F\in\mathcal{F}} e_F
    \leq \sum_{F \in \mathcal{F}} \left(\binom{v(F)}{2} - v(F)\right).
\end{align}

Since $3 \leq v(F) \leq |F| $ for every face $F\in \mathcal{F}$, we deduce the following inequality from \cref{ineq:vf}.

\begin{align}\label{eq:morePrecise}
    m - \tilde{m} \leq \frac{1}{2} \sum_{F \in \mathcal{F}}|F|(|F|-3) = \frac{1}{2}\left(\sum_{F \in \mathcal{F}_{\geq k}} |F| (|F|-3) + \sum_{\ell = 3}^{k-1} \ell(\ell-3) f_\ell\right)
\end{align}

We bound the right-hand side of \cref{eq:morePrecise} from above by bounding the sum over products by the product over sums and using \cref{sum_faces_length_k}.
Since all terms $|F|$ and $|F|-3$ are nonnegative, we use
\[
    \sum_{F\in\mathcal{F}_{\ge k}} |F|(|F|-3)
    \leq
    \left(\sum_{F\in\mathcal{F}_{\ge k}} |F|\right)
    \left(\sum_{F\in\mathcal{F}_{\ge k}} (|F|-3)\right).
\]
Thus, we obtain:

\begin{align}\label{eq:rightHand}
    \frac{1}{2} \left[\left(2\tilde{m} - \sum_{\ell = 3}^{k-1} \ell f_\ell\right)\left(2\tilde{m} - 3f - \sum_{\ell = 3}^{k-1} (\ell-3) f_\ell\right) + \sum_{\ell = 3}^{k-1} \ell(\ell-3) f_\ell\right].
\end{align}

We use Euler's formula $n - \tilde{m} + f = 2$ to substitute $f$ and obtain the following polynomial inequality based on \cref{eq:morePrecise} with \cref{eq:rightHand}:

\begin{align}
  \sv{&}  \tilde{m}^2 - \left(3n-5 + \sum_{\ell = 3}^{k-1} \left(3-\frac{\ell}{2}\right)f_\ell\right)\tilde{m} + m - \frac{1}{2}\left(\sum_{\ell=3}^{k-1}\ell f_\ell\right)\left(\sum_{\ell=3}^{k-1}(\ell-3) f_\ell\right) \sv{\nonumber\\& }+ \frac{1}{2}\sum_{\ell=3}^{k-1}\left(3(n-1) - \ell\right)\ell f_\ell\ \leq 0.
\label{ineq:mi}
\end{align}

The discriminant of this inequality, solving for $\tilde{m}$, is:
\begin{align*}
  \Delta_k \sv{&}= \left(3n - 5 +\sum_{\ell=3}^{k-1}\left(3-\frac{\ell}{2}\right)f_\ell\right)^2 \sv{\\&}- 4\left(m - \frac{1}{2}\left(\sum_{\ell=3}^{k-1}\ell f_\ell\right)\left(\sum_{\ell=3}^{k-1}(\ell-3) f_\ell\right) + \frac{1}{2}\sum_{\ell=3}^{k-1}\left(3(n-1) - \ell\right)\ell f_\ell\right).
\end{align*}

We can rewrite the previous as:%
\begin{align}%
    \Delta_k = \left(3n-7-\frac{3}{2}\sum_{\ell = 3}^{k-1}(\ell-2)f_\ell\right)^2- 4 \left(m - 3n + 6 -\sum_{\ell = 3}^{k-1}\frac{(\ell-2)(\ell-3)}{2} f_\ell \right).
\end{align}

Note that $\Delta_k$ must be nonnegative, otherwise \cref{ineq:mi} cannot hold. Consequently, solving the equation $\Delta_k \geq 0$ gives us the following conditions.

    If $m \leq 3n-6 + \sum_{\ell = 3}^{k-1} \frac{(\ell-2)(\ell-3)}{2}f_\ell$, then $\Delta_k$ is nonnegative.

    If $m > 3n-6 + \sum_{\ell = 3}^{k-1} \frac{(\ell-2)(\ell-3)}{2}f_\ell$, then $\Delta_k$ is nonnegative if and only if 
    \begin{align}\label{eq:disc-positive}\sum_{\ell = 3}^{k-1}(\ell-2) f_\ell \leq 2n - \frac{14}{3} - \frac{4}{3}\sqrt{m - 3n + 6 - \sum_{\ell = 3}^{k-1} \frac{(\ell-2)(\ell-3)}{2} f_\ell}\end{align}
    or \[\sum_{\ell = 3}^{k-1}(\ell-2) f_\ell \geq 2n - \frac{14}{3} + \frac{4}{3}\sqrt{m - 3n + 6 - \sum_{\ell = 3}^{k-1} \frac{(\ell-2)(\ell-3)}{2} f_\ell}.\]
    However, since $m - 3n+6 - \sum_{\ell = 3}^{k-1} \frac{(\ell-2)(\ell-3)}{2}f_\ell > 0$ and it is an integer, it is at least one. Hence, \begin{align}\label{eq:contr}2n - \frac{14}{3} + \frac{4}{3}\sqrt{m - 3n + 6 - \sum_{\ell = 3}^{k-1} \frac{(\ell-2)(\ell-3)}{2} f_\ell} \geq 2n - \frac{10}{3} > 2n-4.\end{align}
    
    The alternative in \cref{eq:contr} is not possible by \cref{lemma:ineq_sum_fl}.
    Therefore, \cref{eq:disc-positive}
    must hold.
Since the left-hand side of \cref{ineq:mi} is a quadratic polynomial in $\tilde{m}$ with leading coefficient $1$, solving the inequality yields precisely the interval between its two roots, namely $b_k^- \leq \tilde{m} \leq b_k^+$.
\end{proof}

We now combine \cref{lemma:simple_k} and \cref{lemma:complex_k} to prove another bound that does not depend on the $f_\ell$ values anymore.
\sv{\cref{fig:asymptotical_intersection} presents a visual representation of the intersection between those bounds, with the filled area showing the possible values of $\frac{\tilde{m}}{n}$.
The maximum possible value of $\frac{\tilde{m}}{n}$ corresponds to the multiplicative constant $c_{\varepsilon}$ described in the discussion after \cref{thm:unc}.}

\begin{lemma}[Combined Bound\sv{ $\clubsuit$}]
\label{lemma:combined_bounds} 
Let $G$ be a connected graph with $n$ vertices and $m$ edges, and let $\tilde{m}=h(G)$.
For $k \geq 3$, if $m > (k-1)(n-2)$, then the following holds.
For $k=3$, we have the trivial bound
\[
    \tilde{m}\le 3n-6.
\]
For $k\ge4$, we have
\begin{align}\label{eq:final}
\tilde{m} \leq 3n - 7 + \frac{3}{k} -  (k-3)\sqrt{\frac{2\left(m-(n-2)(k-1)\right)}{k (k-3)} + \frac{1}{k^2}}.
\end{align}
\end{lemma}

\begin{figure}
    \centering
    \includegraphics[width=0.65\linewidth]{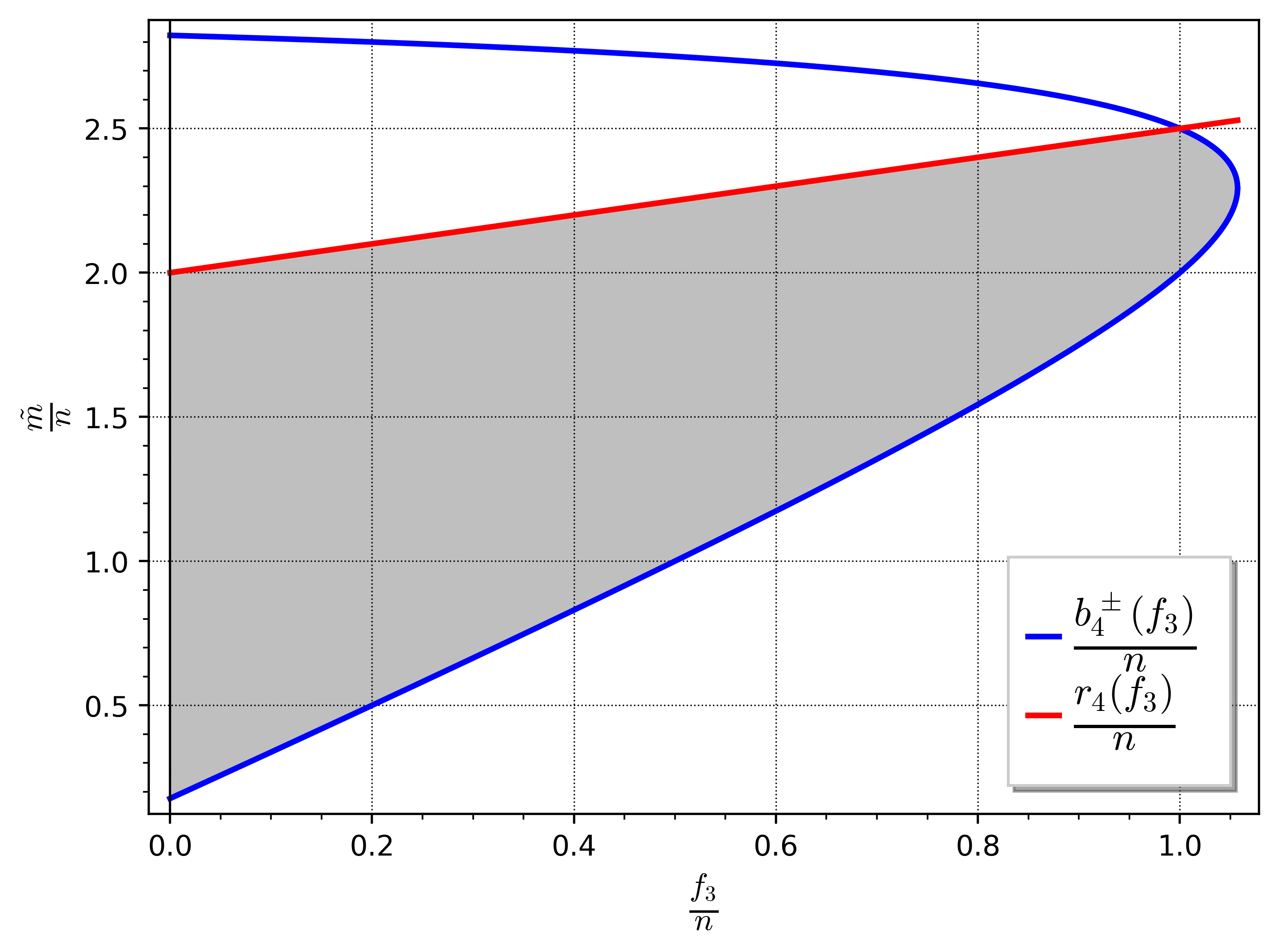}
    \caption{A visual representation of the asymptotic intersection between $\frac{r_4}{n}$ (red line) and $\frac{b_4^{\pm}}{n}$ for $\varepsilon = \frac{1}{2}$ (blue line).
      The gray filled area highlights the feasible values of $\frac{\tilde{m}}{n}$. 
      The maximum possible value of $\frac{\tilde{m}}{n}$ corresponds to the multiplicative constant $c_\epsilon$ described in the introduction.
    }
    \label{fig:asymptotical_intersection}
\end{figure}

\toappendix{
\lv{\begin{proof}}
\sv{\begin{proof}[Proof of \cref{lemma:combined_bounds}]}
For $k=3$, the statement is exactly the trivial planar bound $\tilde{m}\le 3n-6$.
We may therefore assume that $k\ge4$.

For $k \geq 4$, we combine the bounds from \cref{lemma:simple_k,lemma:complex_k} to find a bound on $\tilde{m}$ over all possible values of $f_3, \dots, f_{k-1}$.

Denote by $r_k\coloneqq \frac{k}{k-2}n - \frac{2k}{k-2} + \sum_{\ell = 3}^{k-1}\frac{k-\ell}{k-2} f_\ell$ the bound stated in \cref{lemma:simple_k}.
Fix admissible values of $f_4,\dots,f_{k-1}$, and view $r_k$, $b_k^-$, and $b_k^+$ as functions of $f_3$.
The function $r_k$ is increasing in $f_3$.
The feasible values of $\tilde{m}$ must satisfy both $\tilde{m}\le r_k$ and $b_k^-\le \tilde{m}\le b_k^+$.
Thus the maximum possible value is attained at an intersection of $r_k$ with one of $b_k^-$ and $b_k^+$.

Therefore, our task will be to find an intersection point between $r_k$ and one of $b_k^+$ or $b_k^-$, which is where $\tilde{m}$ can reach its highest value.
\cref{fig:asymptotical_intersection} presents a visual representation of the intersection between those bounds, with the filled area showing the possible values of $\frac{\tilde{m}}{n}$. The maximum possible value of $\frac{\tilde{m}}{n}$ corresponds to the multiplicative constant $c_{\varepsilon}$ described in the discussion after \cref{thm:unc}.

The equation to find the intersection between $b_k^+$ (resp. $b_k^-$) and $r_k$ %
is the following:

\begin{align}
  \frac{k-6}{k-2}n - \frac{k-10}{k-2} - \frac{k-6}{2(k-2)} \sum_{\ell = 3}^{k-1} (\ell -2)f_\ell = -\sqrt{\Delta_k}. \label{eq:startingEq}
\end{align}
(resp. $\frac{k-6}{k-2}n - \frac{k-10}{k-2} - \frac{k-6}{2(k-2)} \sum_{\ell = 3}^{k-1} (\ell -2)f_\ell = \sqrt{\Delta_k}$)

Recall that 
\[
    \Delta_k = \left(3n-7-\frac{3}{2}\sum_{\ell = 3}^{k-1}(\ell-2)f_\ell\right)^2- 4 \left(m - 3n + 6 -\sum_{\ell = 3}^{k-1}\frac{(\ell-2)(\ell-3)}{2} f_\ell \right).
  \]
Accordingly, for fixed $f_4,\dots,f_{k-1}$, we solve the equation:
\begin{align}
\label{eq:delta_k}
    \left(\frac{k-6}{k-2}n - \frac{k-10}{k-2} - \frac{k-6}{2(k-2)} \sum_{\ell = 3}^{k-1} (\ell -2)f_\ell\right)^2 = \Delta_k.
\end{align}

Observe that using \cref{lemma:ineq_sum_fl} we have: 
\begin{align}\label{eq:prev}
    \sum_{\ell = 3}^{k-1} \frac{(\ell-2)(\ell-3)}{2} f_\ell \leq (k-4)\left(\sum_{\ell = 3}^{k-1} \frac{\ell-2}{2} f_\ell\right) \leq (k-4)(n-2).
\end{align}
 
Hence, using the assumption $m > (k-1)(n-2)$ and \cref{eq:prev} we obtain:
\begin{align}
\label{ineq_m}
    m > 3n-6+(k-4)(n-2) \geq 3n -6 + \sum_{\ell = 3}^{k-1} \frac{(\ell-2)(\ell-3)}{2} f_\ell.
\end{align}

Solving \cref{eq:delta_k} for $\sum_{\ell = 3}^{k-1} (\ell -2)f_\ell$ then gives us the following roots:
\begin{align}\label{eq:newroots}
    \sum_{\ell = 3}^{k-1} (\ell -2)f_\ell = 2n - 5 + \frac{2}{k}\pm (k-2)\sqrt{\frac{2\left(m-3n+6-\sum_{\ell = 3}^{k-1} \frac{(\ell-2)(\ell-3)}{2} f_\ell\right)}{k (k-3)} + \frac{1}{k^2}}.
\end{align}

By \cref{ineq_m}, the quantity under the square root in \cref{eq:newroots} is always positive.
Recall \[\sum_{\ell = 3}^{k-1} (\ell -2)f_\ell \leq 2n -4\] by \cref{lemma:ineq_sum_fl}. 
Together with $k\geq 4$  we conclude that among the two roots in \cref{eq:newroots}, only the root with the minus sign can correspond to an admissible intersection.
Given $f_4, \dots, f_{k-1}$, let $f'_3$ denote the value of $f_3$ at this admissible intersection.
Since the term
\[
\sum_{\ell=3}^{k-1}\frac{(\ell-2)(\ell-3)}{2}f_\ell
\]
does not depend on $f_3$, we get:

\begin{align}\label{eq:theRoot}
    f'_3 = 2n - 5 + \frac{2}{k}- \sum_{\ell = 4}^{k-1}(\ell-2)f_\ell - (k-2)\sqrt{\frac{2\left(m-3n+6-\sum_{\ell = 4}^{k-1} \frac{(\ell-2)(\ell-3)}{2} f_\ell\right)}{k (k-3)} + \frac{1}{k^2}}.
\end{align}

Depending on the sign of (left hand-side of \cref{eq:startingEq})
\[
        \frac{k-6}{k-2}n - \frac{k-10}{k-2}
        - \frac{k-6}{2(k-2)}\sum_{\ell = 3}^{k-1} (\ell -2)f_\ell,
\]
$r_k$ intersects with $b_k^+$ if the value is nonpositive, or with $b_k^-$ if the value is nonnegative. 
We conclude in the following way:
\begin{itemize}
    \item First, suppose that the admissible intersection is with $b_k^-$.
    We have $f_3\le f'_3$ as otherwise the inequality $b_k^-\le\tilde{m}\le r_k$ does not hold.
    Thus, starting from the $r_k$ bound using~\cref{eq:theRoot}, we obtain:
    \begin{align}
        \tilde{m} &\leq r_k= \frac{k}{k-2}n - \frac{2k}{k-2} + \sum_{\ell = 3}^{k-1}\frac{k-\ell}{k-2} f_\ell \nonumber\\
            &\leq \frac{k}{k-2}n - \frac{2k}{k-2} + \frac{k-3}{k-2}f'_3 + \sum_{\ell = 4}^{k-1}\frac{k-\ell}{k-2} f_\ell \nonumber\\
        &= 3n - 7 + \frac{3}{k} -  (k-3)\sqrt{\frac{2\left(m-3n+6-\sum_{\ell = 4}^{k-1} \frac{(\ell-2)(\ell-3)}{2} f_\ell\right)}{k (k-3)} + \frac{1}{k^2}} - \sum_{\ell = 4}^{k-1}(\ell-3) f_\ell \nonumber\\
        &\leq 3n - 7 + \frac{3}{k} -  (k-3)\sqrt{\frac{2\left(m-(n-2)(k-1)\right)}{k (k-3)} + \frac{1}{k^2}} - \sum_{\ell = 4}^{k-1}(\ell-3) f_\ell \nonumber\\
        &\leq 3n - 7 + \frac{3}{k} -  (k-3)\sqrt{\frac{2\left(m-(n-2)(k-1)\right)}{k (k-3)} + \frac{1}{k^2}}.      \label{eq:combined-start}
    \end{align}
    Hence, we obtain the claimed bound (\cref{eq:final}) in this case.
  \item In the remaining case, the root $f'_3$ corresponds to the intersection of $r_k$ with $b_k^+$. We know that $r_k$ is increasing when $f_3$ is increasing. Moreover, we have:
    \begin{align*}
        \frac{\partial b_k^+}{\partial f_3} = \frac{3}{4}\left(1-\frac{6n-14-3\sum_{\ell = 3}^{k-1}(\ell-2)f_\ell}{\sqrt{\left(6n-14-3\sum_{\ell = 3}^{k-1}(\ell-2)f_\ell\right)^2 - 16\left(m-\left(3n-6+\sum_{\ell = 3}^{k-1}\frac{(\ell-2)(\ell-3)}{2} f_\ell\right)\right)}}\right).
    \end{align*}
    By \cref{ineq_m}, the quantity
    \[
    Q\coloneqq m-\left(3n-6+\sum_{\ell = 3}^{k-1}\frac{(\ell-2)(\ell-3)}{2}f_\ell\right)
    \]
    is positive. Moreover, by \cref{ineq:complex_sum_faces}, every feasible choice satisfies
    \[
    \sum_{\ell = 3}^{k-1}(\ell-2)f_\ell
    \le
    2n-\frac{14}{3}-\frac{4}{3}\sqrt Q.
    \]
    Hence
    \[
    A\coloneqq 6n-14-3\sum_{\ell = 3}^{k-1}(\ell-2)f_\ell \ge 4\sqrt Q>0,
    \]
    Therefore $\sqrt{A^2-16Q}<A$, and so
    \[
        1-\frac{A}{\sqrt{A^2-16Q}}< 0.
    \]
    The displayed derivative is therefore negative, and $b_k^+$ decreases when $f_3$ increases.

    If $f_3 \leq f'_3$, we can bound $\tilde{m}$ using the same computation leading to \cref{eq:combined-start}.
    If $f_3 > f'_3$, we have shown that $b_k^+(f_3) \leq b_k^+(f'_3)$.
    Therefore, we conclude by the bound in \cref{eq:combined-start}.\qedhere
\end{itemize}
\end{proof}
}

The strength of \cref{lemma:combined_bounds} depends on the choice of $k$, which in turn is constrained by $n$ and $m$.
We conclude with the following simple lemma, which selects a suitable value of $k$, thereby providing a bound independent of $k$.

\begin{lemma}
\label{lemma:alpha}
    Let $G$ be a connected graph with $n$ vertices and $m$ edges, and let $\tilde{m}=h(G)$.
    For any real $\alpha>0$, if $m \geq \frac{3n-6}{\alpha^2}$, then
    \begin{align*}
        \tilde{m} \leq 3n-6-(1-\alpha)\sqrt{2m}.
    \end{align*}
\end{lemma}

\begin{proof}
If $\alpha \geq 1$, then $3n-6 \leq 3n-6-(1-\alpha)\sqrt{2m}$, so the trivial bound $\tilde{m}\le 3n-6$ gives the claim.
Thus, we may assume that $\alpha < 1$.

Set $k \coloneqq \left\lceil\frac{3}{\alpha}\right\rceil$.
Since $\alpha<1$, we have $k\ge4$.
Then
\begin{align}\label{eq:choiceK}
    k-1 < \frac{3}{\alpha} \leq k.
\end{align}
By \cref{eq:choiceK} and the assumption of the lemma,
\[
m \geq \frac{3n-6}{\alpha^2} \geq \frac{(n-2)(k-1)}{\alpha}.
\]
Since $\alpha<1$, this also implies $m>(n-2)(k-1)$, so \cref{lemma:combined_bounds} applies.
It follows that
\begin{align}\label{eq:mAlpha}
    m - (n-2)(k-1) \geq (1-\alpha)m.
\end{align}

Using \cref{eq:mAlpha}, we have
\[
(k-3)\sqrt{\frac{2\left(m-(n-2)(k-1)\right)}{k(k-3)}+\frac{1}{k^2}}
\ge
\sqrt{1-\frac{3}{k}}\sqrt{2m(1-\alpha)}.
\]
Moreover, $3n-7+\frac{3}{k}\le 3n-6$.
Finally, $\frac{3}{\alpha}\le k$ gives $\frac{3}{k}\le\alpha$, and hence
\[
\sqrt{1-\frac{3}{k}}\sqrt{1-\alpha}\ge 1-\alpha.
\]

Starting from \cref{lemma:combined_bounds} and using the estimates above, we conclude:
\begin{align*}
    \tilde{m}
    &\leq 3n - 7 + \frac{3}{k}
    -  (k-3)\sqrt{\frac{2\left(m-(n-2)(k-1)\right)}{k (k-3)} + \frac{1}{k^2}} \\
    &\leq 3n - 6 - \sqrt{1-\frac{3}{k}} \sqrt{2m(1-\alpha)} \\
    &\leq 3n - 6 - (1-\alpha)\sqrt{2m}. \qedhere
\end{align*}
\end{proof}

The main theorem (\cref{thm:hG}) now follows from \cref{lemma:alpha}.

\begin{proof}[Proof of \cref{thm:hG}]
    Since $G$ is connected and $n\ge3$, we have $m\ge n-1>0$.
    Since we have $\tilde{m} = h(G)$, we apply \cref{lemma:alpha} with $\alpha = \sqrt{\frac{3n-6}{m}}$ to obtain the desired bound.
\end{proof}

\section{Stronger Bounds for Triangle-Free Graphs}\label{sec:triangle}
In this section, we prove two theorems for triangle-free graphs.
\subsection{Proof of \cref{thm:hG-triangle}}

\lv{\thmhtfree*}

We keep the notations from \cref{sec:main}.
Note that if $\tilde{D}'$ is a tree, then $\tilde{m}=n-1$.
Using Mantel's bound $m\le \lfloor n^2/4\rfloor$, the desired inequality
\[
n-1 \leq 2n-4-\sqrt{\frac{m}{2}} + \sqrt{\frac{5}{2}(n-2)}
\]
follows directly.
Thus, we may assume that $\tilde{D}'$ is not a tree and use \cref{asu:count_once}.
Since $G$ is triangle-free, so is the subgraph represented by $\tilde{D}'$.
Moreover, by \cref{asu:count_once}, every face of length $3$ would contain a cycle of length $3$.
Hence $f_3=0$.
We now use lemmas proved in \cref{sec:main}, and the plan is to improve the combined bound from \cref{lemma:combined_bounds} for triangle-free graphs (i.e., when $f_3 = 0$).

\begin{lemma}[Combined Bound for Triangle-Free Graphs\sv{ $\clubsuit$}]
\label{lemma:combined_bounds_tfree} 
Let $G$ be a triangle-free connected graph with $n$ vertices and $m$ edges, and let $\tilde{m}=h(G)$.
For $k \geq 3$, if $m > (k-1)(n-2)$, then the following holds.
For $k\le 4$, we have the trivial triangle-free planar bound
\[
    \tilde{m}\le 2n-4.
\]
For $k\ge5$, we have
\begin{align}\label{eq:final_tfree}
\tilde{m} \leq 2n - \frac{9}{2} + \frac{2}{k} - (k-4)\sqrt{\frac{m-(n-2)(k-1)}{2k(k-3)} + \frac{1}{4k^2}}.
\end{align}
\end{lemma}

The proof is very similar to that of \cref{lemma:combined_bounds}.
\sv{We postpone the proof to \Cref{sec:app}.}
\toappendix{
  \begin{proof}[Proof of \cref{lemma:combined_bounds_tfree}]

For $k\le 4$, the statement is the trivial triangle-free planar bound $\tilde{m}\le 2n-4$.
Thus, we may assume that $k\ge5$.

For $k \geq 5$, we can combine bounds from \cref{lemma:simple_k} and \cref{lemma:complex_k} the same way we did in the proof of \cref{lemma:combined_bounds}. Once again, $r_k$ intersects with either $b_k^+$ or $b_k^-$ when \cref{eq:theRoot} is verified. Since $f_3 = 0$ for triangle-free graphs, the equation verified at this intersection becomes:

\begin{align}
\label{eq:theRoot_tfree}
    \sum_{\ell = 4}^{k-1} (\ell -2)f_\ell = 2n - 5 + \frac{2}{k}- (k-2)\sqrt{\frac{2\left(m-3n+6-\sum_{\ell = 4}^{k-1} \frac{(\ell-2)(\ell-3)}{2} f_\ell\right)}{k (k-3)} + \frac{1}{k^2}}.
\end{align}

Using \cref{lemma:ineq_sum_fl}, we have
\[
\sum_{\ell = 4}^{k-1}\frac{(\ell-2)(\ell-3)}{2}f_\ell
\le (k-4)(n-2).
\]
Together with $m>(k-1)(n-2)$, this gives
\[
m>3n-6+\sum_{\ell = 4}^{k-1}\frac{(\ell-2)(\ell-3)}{2}f_\ell.
\]

Depending on the sign of $\frac{k-6}{k-2}n - \frac{k-10}{k-2} - \frac{k-6}{2(k-2)} \sum_{\ell = 4}^{k-1} (\ell -2)f_\ell$, this solution corresponds either to the intersection of $r_k$ with $b_k^+$ (if it is $\leq0$) or with $b_k^-$ (if it is $\geq0$).
Given $f_5,\dots,f_{k-1}$, let $f'_4$ denote the value of $f_4$ at this admissible intersection.
We conclude in the following way, analogously to the proof of \cref{lemma:combined_bounds}:
\begin{itemize}
    \item If $r_k$ intersects $b_k^-$, then, as in the proof of \cref{lemma:combined_bounds}, the feasibility condition $b_k^-\le \tilde{m}\le r_k$ implies that the actual value of $f_4$ is at most the value at the admissible intersection.
      Thus, starting from the $r_k$ bound, we obtain:
    \begin{align}
        \tilde{m} &\leq r_k= \frac{k}{k-2}n - \frac{2k}{k-2} + \sum_{\ell = 4}^{k-1}\frac{k-\ell}{k-2} f_\ell \nonumber\\
            &\leq \frac{k}{k-2}n - \frac{2k}{k-2} + \frac{k-4}{k-2}f'_4 + \sum_{\ell = 5}^{k-1}\frac{k-\ell}{k-2} f_\ell \nonumber\\
        &= 2n - \frac{9}{2} + \frac{2}{k} - \frac{k-4}{2}\sqrt{\frac{2\left(m-(n-2)(k-1)\right)}{k (k-3)} + \frac{1}{k^2}} - \sum_{\ell = 4}^{k-1}\frac{\ell-4}{2} f_\ell \nonumber\\
        &\leq 2n - \frac{9}{2} + \frac{2}{k} - (k-4)\sqrt{\frac{\left(m-(n-2)(k-1)\right)}{2k (k-3)} + \frac{1}{4k^2}}     \label{eq:combined-start_tfree}
    \end{align}
    Hence, we obtain the claimed bound (\cref{eq:final_tfree}) in this case.
  \item In the remaining case, the root $f'_4$ corresponds to the intersection of $r_k$ with $b_k^+$. We know that $r_k$ is increasing when $f_4$ is increasing. Moreover, we have:
    \begin{align*}
        \frac{\partial b_k^+}{\partial f_4} = \frac{1}{2}\left(1-\frac{3 \left(3n-7-\frac{3}{2}\sum_{\ell = 4}^{k-1}(\ell-2)f_\ell\right) -2}{\sqrt{\left(3n-7-\frac{3}{2}\sum_{\ell = 4}^{k-1}(\ell-2)f_\ell\right)^2 - 4\left(m-\left(3n-6+\sum_{\ell = 4}^{k-1}\frac{(\ell-2)(\ell-3)}{2} f_\ell\right)\right)}}\right).
    \end{align*}
Set
\[
Q=m-\left(3n-6+\sum_{\ell = 4}^{k-1}\frac{(\ell-2)(\ell-3)}{2}f_\ell\right)
\]
and
\[
B=3n-7-\frac{3}{2}\sum_{\ell = 4}^{k-1}(\ell-2)f_\ell.
\]
By the analogue of the inequality above, we have $Q>0$.
Moreover, \cref{ineq:complex_sum_faces} gives $B\ge 2\sqrt{Q}$.
Since $Q$ is a positive integer, $Q\ge1$, and hence $B\ge2$.
Therefore
\[
3B-2\ge B\ge \sqrt{B^2-4Q},
\]
which implies that the displayed derivative is nonpositive.
Thus $b_k^+$ decreases when $f_4$ increases.
    If $f_4 \leq f'_4$, we can bound $\tilde{m}$ using the same computation leading to \cref{eq:combined-start_tfree}.
    If $f_4 > f'_4$, we have shown that $b_k^+(f_4) \leq b_k^+(f'_4)$.
    Therefore, we conclude by the bound in \cref{eq:combined-start_tfree}.\qedhere
\end{itemize}
\end{proof}
}
The next lemma is a triangle-free analogue of \Cref{lemma:alpha}\sv{, and its proof is postponed to \cref{sec:app}}.

\sv{\begin{lemma}[$\clubsuit$]}
\lv{\begin{lemma}}
\label{lemma:alpha_tfree}
    Let $G$ be a triangle-free connected graph with $n$ vertices and $m$ edges, and let $\tilde{m}=h(G)$.
    For any real $\alpha>0$, if $m \geq \frac{5n-10}{\alpha^2}$, then
    \begin{align*}
        \tilde{m} \leq 2n-4-(1-\alpha)\sqrt{\frac{m}{2}}.
    \end{align*}
\end{lemma}

\toappendix{
  \begin{proof}[Proof of \cref{lemma:alpha_tfree}]
If $\alpha \geq 1$, then $2n-4 \leq 2n-4-(1-\alpha)\sqrt{\frac{m}{2}}$, so the triangle-free planar bound $\tilde{m}\le 2n-4$ gives the claim.
Thus, we may assume that $\alpha < 1$.

Set $k \coloneqq \left\lceil\frac{5}{\alpha}\right\rceil$.
Since $\alpha<1$, we have $k\ge6$.
Then
\begin{align}\label{eq:choiceK_tfree}
    k-1 < \frac{5}{\alpha} \leq k.
\end{align}
By \cref{eq:choiceK_tfree} and the assumption of the lemma,
\[
m \geq \frac{5n-10}{\alpha^2} > \frac{(n-2)(k-1)}{\alpha}.
\]
Since $\alpha<1$, this implies $m>(n-2)(k-1)$, so \cref{lemma:combined_bounds_tfree} applies.
It follows that
\begin{align}\label{eq:mAlpha_tfree}
    m - (n-2)(k-1) \geq (1-\alpha)m.
\end{align}

Using
\[
m-(n-2)(k-1)\ge (1-\alpha)m,
\]
we have
\[
(k-4)\sqrt{\frac{m-(n-2)(k-1)}{2k(k-3)}+\frac{1}{4k^2}}
\ge
\sqrt{1-\frac{5}{k}+\frac{1}{k^2}}\sqrt{\frac{m(1-\alpha)}{2}}.
\]
Moreover, $2n-\frac{9}{2}+\frac{2}{k}\le 2n-4$.
Finally, $\frac{5}{\alpha}\le k$ gives $\frac{5}{k}\le\alpha$, and hence
\[
\sqrt{1-\frac{5}{k}+\frac{1}{k^2}}\sqrt{1-\alpha}\ge 1-\alpha.
\]

Starting from \cref{lemma:combined_bounds_tfree} and using \cref{eq:mAlpha_tfree}, we conclude: 

\begin{align*}
    \tilde{m} &\leq 2n - \frac{9}{2} + \frac{2}{k} -  (k-4)\sqrt{\frac{m-(n-2)(k-1)}{2k (k-3)} + \frac{1}{4k^2}} \\
    &\leq 2n - 4 -  \sqrt{1-\frac{5}{k}+\frac{1}{k^2}} \sqrt{\frac{m(1-\alpha)}{2}} \\
    &\leq 2n - 4 - (1-\alpha)\sqrt{\frac{m}{2}} \qedhere
\end{align*}
\end{proof}
}

\cref{thm:hG-triangle} follows from \cref{lemma:alpha_tfree}.

\begin{proof}[Proof of \cref{thm:hG-triangle}]
    Since $G$ is connected and $n\ge3$, we have $m\ge n-1>0$.
    Applying \cref{lemma:alpha_tfree} with $\alpha=\sqrt{\frac{5n-10}{m}}$, whose hypothesis holds with equality, gives
    \[
    \tilde{m}
    \le
    2n-4-\sqrt{\frac{m}{2}}
    +\sqrt{\frac{5n-10}{m}}\sqrt{\frac{m}{2}}
    =
    2n-4-\sqrt{\frac{m}{2}}+\sqrt{\frac{5}{2}(n-2)}.
    \]
    Since $\tilde{m}=h(G)$, this proves the theorem.
\end{proof}

\subsection{Construction---Proof of \cref{thm:tightness_tfree}}

\lv{\thmtighttfree*}
We provide a construction similar to the one presented in \cref{sec:construct}, based on results of Mengersen~\cite{germanPaper}.

\begin{figure}[tb]
\centering
\begin{center}
\begin{tikzpicture}[scale=0.8]\small
\tikzstyle{every node}=[draw, color=black, shape=circle, inner sep=4pt]
\foreach \u in {0,90,...,360} { \node[thick, fill=white] (W\u) at (\u:2) {}; }
\node[thick, fill=white] (W90) at (90:2) {$v$};
\foreach \u in {45,135,...,360} { \node[thick, fill=black] (W\u) at (\u:2) {}; }

\tikzstyle{every path}=[draw, color=black, thick]
\begin{scope}[on background layer]
\foreach \u [evaluate=\u as \next using {int(mod(\u+45, 360))}] in {0,45,...,360} {   
    \draw (W\u)--(W\next);
}
\foreach \u in {225,315} {
    \draw (W90)--(W\u);
}
\foreach \u [evaluate=\u as \v using {int(mod(\u+135,360))}] [evaluate=\u as \w using {int(mod(\u+225,360))}] in {0,180,270} {
       \draw[dotted] (W\u) to [out=\u+90, in=\v-90, looseness = 1.7] (W\v);
       \draw[dotted] (W\w) to [out=\w+90, in=\u-90, looseness = 1.7] (W\u);
}

\end{scope}[on background layer]
\end{tikzpicture}
\end{center}
\vspace{-3em}
\caption{An example of $DK_{4,4}$. See the \hyperref[constr_DKxx]{Construction of $DK_{x,x}$} paragraph for the formal description.}
\label{fig-DK_44}
\end{figure}

\subparagraph*{Construction of $DK_{x,x}$.}\phantomsection\label{constr_DKxx}
Given $x \geq 2$, we can construct a drawing $DK_{x,x}$ of $K_{x,x}$ such that $DK_{x,x}$ has exactly $h(K_{x,x})$ uncrossed edges, i.e. $3x - 2$, uncrossed edges by \cref{thm-germanPaper}. One way to construct $DK_{x,x}$ is to start with $x$ white vertices and $x$ black vertices on the outer face, alternating white and black vertices and connecting each vertex to the next, to form a cycle of length $2x$. 
We can then draw the remaining $x-2$ uncrossed edges by choosing a vertex $v$ and drawing the edges to $x-2$ vertices of opposite color (except for the two vertices to which $v$ is already connected) inside the cycle.
The remaining $x^2-(3x-2)$ edges can then be drawn in the outer face; they may cross one another, but they do not cross the first $3x-2$ edges.
See \cref{fig-DK_44} for an illustration.

\begin{figure}
\begin{center}
\begin{tikzpicture}[scale=0.8]\small
\tikzstyle{every node}=[draw, color=black, shape=circle, inner sep=4pt]
\foreach \u in {0,90,...,360} { \node[thick, fill=white] (W\u) at (\u:2) {}; }
\node[thick, fill=white] (W90) at (90:2) {$v$};
\foreach \u in {45,135,...,360} { \node[thick, fill=black] (W\u) at (\u:2) {}; }
\node[thick, fill=white] (W1) at (0,0) {};
\node[thick, fill=white] (W2) at (0,-0.7) {};
\node[thick, fill=white] (W3) at (0,-1.4) {};

\tikzstyle{every path}=[draw, color=black, thick]
\begin{scope}[on background layer]
\foreach \u [evaluate=\u as \next using {int(mod(\u+45, 360))}] in {0,45,...,360} {   
    \draw (W\u)--(W\next);
}
\foreach \u in {225,315} {
    \draw (W90)--(W\u);
}
\foreach \u [evaluate=\u as \v using {int(mod(\u+135,360))}] [evaluate=\u as \w using {int(mod(\u+225,360))}] in {0,180,270} {
       \draw[dotted] (W\u) to [out=\u+90, in=\v-90, looseness = 1.7] (W\v);
       \draw[dotted] (W\w) to [out=\w+90, in=\u-90, looseness = 1.7] (W\u);
}
\foreach \u in {1,2,3}{
   \draw (W225)--(W\u); \draw (W315)--(W\u);
}

\end{scope}[on background layer]
\end{tikzpicture}
\end{center}
\vspace{-3em}
\caption{An example of $DH_{4,11}$. See the \hyperref[constr_tfree]{Construction of $DH_{x,n}$} paragraph for the formal description.}
\label{fig-Hxn_tfree}
\end{figure}

\subparagraph*{Construction of $H_{x,n}$ and $DH_{x,n}$.}\phantomsection\label{constr_tfree}
Given $n$ and $\frac{n}{2}\ge x\ge 4$, the graph $H_{x,n}$ is constructed from $K_{x,x}$ by adding $n-2x$ new vertices, each adjacent to the same two arbitrarily selected vertices from one partite class of $K_{x,x}$.
See \cref{fig-Hxn_tfree} for an illustration.
Then, we construct $DH_{x,n}$ starting from $DK_{x,x}$.
The remaining $n-2x$ vertices are inserted to $DH_{x,n}$ in the following way:
Pick a quadrilateral $Q$ in the planar part of $DH_{x,n}$, inside the outer face, whose two opposite vertices are the selected vertices from the same partite class.
Place all $n-2x$ new vertices inside $Q$ and connect each of them to these two opposite vertices.
These new vertices are placed in the other partite class, so the graph remains bipartite; the edges can be drawn inside $Q$ so that the planar part is subdivided into quadrilateral faces.
Note that all faces in the complement of the outer face remain quadrilaterals after this transformation, and the graph remains bipartite.

\medskip

\sv{We postpone the proof of \cref{thm:tightness_tfree} to \cref{sec:app}.}

\toappendix{
\begin{proof}[Proof of \cref{thm:tightness_tfree}]
We take $H_{x,n}$ as defined in the \hyperref[constr_tfree]{construction} paragraph, where the value of $x$ will be chosen below.
By construction, $H_{x,n}$ has $n$ vertices.
The graph $H_{x,n}$ is connected and bipartite, and hence triangle-free.
Let $D$ be the subdrawing of $DH_{x,n}$ consisting of the $3x-2$ uncrossed edges of $DK_{x,x}$ together with all edges added when inserting the remaining $n-2x$ vertices.
Since the new vertices are placed inside a quadrilateral face of the planar part and connected to the two opposite boundary vertices, all newly added edges are uncrossed.
Let $m'$ be the number of edges of $D$.
By the construction of $DH_{x,n}$, $m'$ is equal to the sum of the number of uncrossed edges in $DK_{x,x}$ and the number of added edges to obtain $H_{x,n}$.
Thus,
\begin{itemize}
    \item $m' = 3x-2 + 2(n-2x)$,
    \item $m = x^2 + 2(n-2x)$.
\end{itemize}

Hence, we have $m' = 2n - 2 - x$ and  
\begin{align}\label{m-bound_tfree}
      m &= 2n + x(x-4).
\end{align}

Since $n \geq 2x$, we have
\[
    m-x^2=2n-4x\ge0.
\]
Therefore,
\begin{align}\label{eq:mx_tfree}
    \sqrt{m} = \sqrt{2n+x(x-4)}
    \geq x.
\end{align}

Combining \cref{eq:mx_tfree} with $m'\leq h(H_{x,n})$ proves Property \hyperref[ctfree:1]{1)} of \cref{thm:tightness_tfree}:

\begin{align}
\label{ineq:lower_hG_tfree}
2n-2-\sqrt{m} \leq 2n-2-x \leq h(H_{x,n}). 
\end{align}

We now choose an appropriate value of $x$ to approach the desired density.

Recall \cref{m-bound_tfree}.
Solving the equation $2n+x(x-4)=m=\varepsilon n^2$ yields the following relevant root:
\begin{align}\label{eq:root_tfree}
    x_0 = 2 + \sqrt{\varepsilon n^2 - 2n + 4}.
\end{align}
Recall that $n \geq \frac{2}{\varepsilon}$ implies $\varepsilon n^2-2n\ge0$, and hence $x_0\ge4$.
The condition $\varepsilon \leq \frac{1}{4}$ ensures that $x_0 \leq \frac{n}{2}$.

Choose the integer $x = \lfloor x_0 \rfloor$.
Since $x_0\ge4$ and $x\le x_0\le n/2$, we have $4\le x\le n/2$.
Thus $x$ is admissible for the construction.

Let $f(t)=2n+t(t-4)$.
Since $f$ is increasing for $t\ge4$ and $x=\lfloor x_0\rfloor\le x_0$, we have
\[
    m=f(x)\le f(x_0)=\varepsilon n^2.
\]
Moreover, since $x>x_0-1$ and $f$ is increasing for $t\ge x_0-1\ge3$, starting from \cref{m-bound_tfree}, we obtain:
\begin{align*}
  m &=2n + x(x-4) > 2n + (x_0-1)(x_0-1-4) = 2n + x_0(x_0-4) - 2 x_0 + 5 \sv{\\&}= \varepsilon n^2 - 2x_0 +5\lv{\\ &} \geq \varepsilon n^2 - n + 5.
\end{align*}

Therefore,
\begin{align}\label{e-bound_tfree}
    \varepsilon -\frac{1}{n} + \frac{5}{n^2} < \frac{m}{n^2} \leq \varepsilon,
\end{align}
which implies Property~\hyperref[ctfree:2]{2)} of \cref{thm:tightness_tfree}.

The theorem follows from \cref{ineq:lower_hG_tfree} and \cref{e-bound_tfree}, since the strict lower-bound in \cref{e-bound_tfree} implies the required weak inequality.
\end{proof}
}

\section{Conclusions}\label{sec:conc}
We presented a new general bound for the uncrossed number in \cref{thm:unc}.
Our bound is tight up to lower-order terms when formulated in terms of the maximum uncrossed subgraph number (\cref{thm:hG}).
For the uncrossed number itself, however, we do not know whether the bound is tight for any fixed constant density (apart from complete graphs).

\begin{question}\label{q}
Is the bound in \cref{thm:unc} tight for dense graphs for every constant density $0<\varepsilon<\frac{1}{2}$, i.e., when $m=\varepsilon n^2$?
\end{question}

For triangle-free graphs, the bound in \cref{thm:hG-triangle} is probably not tight.
However, the construction from \cref{thm:tightness_tfree} provides some insight into what a tight general bound on $h(G)$ for triangle-free graphs could look like.

\begin{conjecture}[Conjectured tight bound for $h(G)$ on triangle-free graphs]\label{con:triangle}
 For every $n \geq 3$, for every connected triangle-free graph $G$ with $n$ vertices and $m$ edges, we have
 \begin{equation}\label{eq:conj_tfree}
     h(G) \le 2n-4 - \sqrt{m} + \sqrt{2(n-2)}.
 \end{equation}
\end{conjecture}

The additional $\sqrt{2(n-2)}$ term ensures \cref{eq:conj_tfree} remains true for planar triangle-free graphs with $m = 2n - 4$ edges.

Lastly, we reiterate \cite[Conjecture 16]{Uncrossed2}. Resolving \cref{q} positively for some constant $0<\varepsilon<\frac{1}{2}$ would imply this conjecture.
Indeed, if the bound in \cref{thm:unc} is tight for some graph class $\mathcal{G}$ of constant (independent of $n$) density $\varepsilon<\frac{1}{2}$, then $\unc(G)$ has a different asymptotic denominator constant from the outerthickness for graphs $G\in\mathcal{G}$, since the latter satisfies the trivial lower-bound
\[
\theta_o(G)\ge \left\lceil\frac{m}{2n-3}\right\rceil.
\]

\begin{conjecture}[{\cite[Conjecture 16]{Uncrossed2}}]\label{con:outerthickness-gap}
  For every positive integer $k$, there exists a graph $G$ such that \sv{$\theta_o(G)-\unc(G) \geq k$.}
  \lv{  \[\theta_o(G)-\unc(G) \geq k.\]}
\end{conjecture}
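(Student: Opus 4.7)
The plan is to exploit the gap between the new lower bound on $\unc$ from \cref{thm:unc} and the trivial lower bound $\theta_o(G) \geq \lceil m/(2n-4) \rceil$: the denominators are $(3-\sqrt{2\varepsilon})n + o(n)$ versus $2n$. For any fixed density $\varepsilon \in (0, \tfrac{1}{2})$ we have $3 - \sqrt{2\varepsilon} > 2$, so any graph family of density~$\varepsilon$ for which \emph{both} bounds are tight up to~$o(n)$ would make $\theta_o(G)-\unc(G)$ grow linearly in~$n$, and taking $n$ large enough would yield the conjecture. This is precisely the connection \cref{q} is asking about.

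Concretely, I would take the construction $G_{x,n}$ from \cref{sec:constuct}, choosing $x$ so that $m/n^2 \to \varepsilon \in (0, \tfrac{1}{2})$. By \cref{thm:tightness} combined with \cref{obs:uncH} we already get the lower bound $\unc(G_{x,n}) \geq m/(3n-3-\sqrt{2m})$. The crucial missing ingredient is a matching upper bound: an explicit uncrossed collection of size $m/(3n-3-\sqrt{2m}) + o(n)$.

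A candidate strategy exploits the structure of $G_{x,n}$, which is a planar triangulation together with a $K_x$ "cap" of non-planar edges drawn inside the outer face of the wheel $DW_{x+1}$. One would rotate the cap's interior through an optimal uncrossed collection of $K_x$ of size $\lceil (x-1)/4 \rceil$ (as guaranteed by \cref{eq:kn}), keeping the outer triangulation planar in every drawing so that every triangulation edge is uncrossed throughout. Since $x \approx \sqrt{2m}$, this gives roughly $\sqrt{2m}/4$ drawings, which matches the target up to $o(n)$. Comparing against $\theta_o(G_{x,n}) \geq m/(2n-4)$, the resulting gap would be $\bigl(\tfrac{\varepsilon}{2} - \tfrac{\varepsilon}{3-\sqrt{2\varepsilon}}\bigr) n + o(n)$, diverging with~$n$ for any fixed $\varepsilon < \tfrac{1}{2}$.

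The main obstacle is the upper bound on $\unc(G_{x,n})$. No general method is known for tight upper bounds on $\unc$ beyond complete and complete bipartite graphs, and it is genuinely unclear whether the triangulated planar scaffold can be simultaneously preserved while cycling the cap's interior through a $\lceil(x-1)/4\rceil$-sized uncrossed collection of $K_x$, since the cap vertices have prescribed positions on the boundary of the outer face. A fallback, if $G_{x,n}$ proves too rigid, is a gadget-based family: attach many disjoint copies of a small graph $H$ with $\theta_o(H) > \unc(H)$ along a planar scaffold, then use an additivity-type argument to show that the local outerplanarity obstructions accumulate in $\theta_o$ while the uncrossed collections of the copies can be synchronised so that $\unc$ grows only by a constant.
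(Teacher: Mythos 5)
This statement is a \emph{conjecture}: the paper does not prove it, but merely restates \cite[Conjecture 16]{Uncrossed2} and observes that it would follow from a positive answer to the open \cref{q}. Your proposal correctly rediscovers exactly that reduction (compare $\theta_o(G)\ge\lceil m/(2n-4)\rceil$ against a hoped-for matching upper bound $\unc(G)\le \varepsilon n/(3-\sqrt{2\varepsilon})+o(n)$), but it does not close the gap, and you say so yourself. So the proposal is a research plan, not a proof; the missing upper bound on $\unc(G_{x,n})$ is precisely the open problem the paper leaves behind.

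Beyond the acknowledged gap, there is a concrete quantitative error in your candidate strategy. Rotating the $K_x$ cap through an optimal uncrossed collection of $K_x$ would use about $\unc(K_x)=\lceil(x-1)/4\rceil\approx \sqrt{2m}/4=\tfrac{\sqrt{2\varepsilon}}{4}n$ drawings. This does \emph{not} match the target $m/(3n-3-\sqrt{2m})\approx \varepsilon n/(3-\sqrt{2\varepsilon})$ up to $o(n)$: writing $t=\sqrt{2\varepsilon}$, the ratio of the two is $(3-t)/(2t)$, which equals $1$ only at $t=1$ (i.e., $\varepsilon=\tfrac12$, the complete-graph case) and exceeds $1$ for all $\varepsilon<\tfrac12$. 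Worse, $\tfrac{\sqrt{2\varepsilon}}{4}n$ is \emph{larger} than the outerthickness lower bound $\tfrac{\varepsilon}{2}n$ whenever $\varepsilon<\tfrac12$ (since $\sqrt{2\varepsilon}>2\varepsilon$ there), so even if the cap-rotation construction could be realized it would certify no gap at all between $\theta_o$ and $\unc$. Your fallback is also not viable as described: $\theta_o$ (like thickness and $\unc$) behaves as a maximum over disjoint components or blocks, not additively --- overlaying the outerplanar decompositions of the copies shows that local obstructions do not accumulate --- so attaching many copies of a gadget $H$ cannot by itself drive $\theta_o-\unc$ to infinity. Establishing the conjecture genuinely requires a new upper-bound construction for $\unc$ on some dense family, which remains open.
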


\lv{\bibliographystyle{alphaurl}}
\sv{\bibliographystyle{plainurl}}%
\bibliography{lit.bib}

\sv{
  \newpage
  \appendix 
  \section{Appendix}\label{sec:app}
\appendixText
}

\end{document}